\def\bint{{\ifinner\rlap{\bf\kern.30em--}
\int\else\rlap{\bf\kern.35em--}\int\fi}\ignorespaces}
\def\sbint{{\ifinner\rlap{\bf\kern.32em--}
\hspace{0.078cm}\int\else\rlap{\bf\kern.45em--}\int\fi}\ignorespaces}
\def\rr{\mathbb{R}}
\def\rn{\mathbb{R}^n}
\def\cc{\mathbb{C}}
\def\nn{\mathbb{N}}
\def\lz{\lambda}
\def\ls{\lesssim}
\def\BMO{\mathrm{BMO}}
\def\morrey{L^{p,\,\lambda}}
\def\CMO{\mathrm{CMO}}
\def\r{\right}
\def\lf{\left}
\def\gfz{\genfrac{}{}{0pt}{}}
\def\r{\right}
\def\lf{\left}
\def\gfz{\genfrac{}{}{0pt}{}}
\def\supp{{\mathop\mathrm{\,supp\,}}}
\def\loc{{\mathop\mathrm{\,loc\,}}}
\def\BMO{{\mathop\mathrm{\,BMO\,}}}
\def\eqref#1{(\ref{#1})}
\def\func#1{\mathop{\mathrm{#1}}}
\def\supp{\func{supp}}
\newtheorem{theorem}{Theorem}[section]
\newtheorem{lemma}[theorem]{Lemma}
\theoremstyle{definition}
\newtheorem{definition}[theorem]{Definition}
\numberwithin{equation}{section}
\begin{document}

\title{\bf\Large Boundedness and Compactness Characterizations of Cauchy Integral Commutators on
Morrey Spaces
\footnotetext{\hspace{-0.35cm} 2010 {\it
Mathematics Subject Classification}: Primary 42B20;
Secondary 47B47, 46E35, 42B35. \endgraf
{\it Key words and phrases}. Cauchy integral, commutator, boundedness, compactness, Morrey space.
 \endgraf
Dachun Yang is supported by the NNSF of China (Grant No. 11571039 and and 11671185).
Dongyong Yang is supported by the NNSF of China (Grant No. 11571289).}}
\date{ }
\author{Jin Tao, Dachun Yang and Dongyong Yang\footnote{Corresponding author / January 4, 2018.}}
\maketitle

\vspace{-0.8cm}

\begin{center}
\begin{minipage}{13cm}
{\small {\bf Abstract}\quad Let $C_\Gamma$
be the Cauchy integral operator on a Lipschitz curve $\Gamma$. In this article,
the authors show that the commutator $[b,C_\Gamma]$ is bounded (resp., compact) on the
Morrey space $L^{p,\,\lambda}(\mathbb R)$  for any (or some)
$p\in(1, \infty)$ and $\lambda\in(0, 1)$ if and only if $b\in {\rm BMO}(\mathbb R)$
(resp., ${\rm CMO}(\mathbb R)$). As an application, a factorization of the classical
Hardy space $H^1(\mathbb R)$ in terms of $C_\Gamma$ and its adjoint operator  is obtained.}
\end{minipage}
\end{center}

\vspace{0.0cm}

\section{Introduction}

To study the factorization theorem of the Hardy space $H^1(\rn)$, in their celebrated
work \cite{CRW1976}, Coifman et al. proved that, if a function $b\in {\rm BMO}(\mathbb R^n)$, then the
commutator $[b,T]f:=bT (f)-T(bf)$  of a Calder\'on-Zygmund singular integral operator $T$ of convolution
type with smooth kernel is bounded on $L^p(\mathbb R^n)$ for any $p\in(1, \infty)$; they also proved that, if $[b, R_j]$ is bounded on $L^p(\mathbb R^n)$ for every Riesz transform $R_ j$, $j\in\{1,\ldots, n\}$, then $b\in {\rm BMO}(\mathbb R^n)$.
This equivalent characterization of the boundedness of commutators was further studied by
Janson \cite{j78} and Uchiyama \cite{U1978}, respectively.  Moreover, Uchiyama \cite{U1978}
showed that $[b, T]$ is compact on $L^p(\mathbb R^n)$ for any $p\in(1,\infty)$ if and only if
$b\in{\rm CMO}(\mathbb R^n)$, which is the BMO$(\rn)$-closure of $C_c^\infty(\rn)$,
the set of all infinitely differentiable functions on $\rn$ with compact supports.
Since then, there have been a lot of articles concerning the boundedness and the compactness
of commutators on function spaces as well as their applications in PDEs (see, for example, \cite{clms,is,CDW2009,ly,yy,t,GLW2017,MWY} and references therein).
In particular, Di Fazio and Ragusa \cite{dr} in 1991 gave a characterization of the boundedness
of $[b, T]$ on the Morrey space $L^{p,\,\lz}(\rn)$ for any $\lambda\in(0, n)$ and $p\in(1, \infty)$.
In 2012, Chen et al. \cite{CDW2012} further established the equivalent characterization of the
compactness of $[b, T]$ on $L^{p,\,\lz}(\rn)$ for any $\lambda\in(0,n)$ and $p\in(1,\infty)$.
For more results on the boundedness of operators on Morrey spaces, we refer the reader
to \cite{AM1997,KM2006, KS2009,AX2012,DX2007}.
We only mention that, since they were introduced by Morrey in \cite{m}, Morrey spaces have
proved very useful in PDEs;
see, for example, \cite{m,k,dr93,s} and references therein.

Let $\lambda\in(0, 1)$ and $C_\Gamma$ be the well-known Cauchy integral operator on a
Lipschitz curve $\Gamma$ (see Definition \ref{Cauchy} below). It is well known that
$C_\Gamma$ is a Calder\'{o}n-Zygmund operator of non-convolution type.
Recently, Li et al. \cite{LNWW2017} obtained the equivalent characterizations of
the boundedness and the compactness of the commutator $[b, C_\Gamma]$ on $L^p(\rr)$
for any $p\in(1, \infty)$. Observing that the Lebesgue space $L^p(\rr)$ can be seen as
$\morrey(\rr)$ with $\lambda=0$, the purpose of this article is to establish the equivalent
characterizations of the boundedness and the compactness of $[b, C_\Gamma]$ on
Morrey spaces $\morrey(\rr)$ for any $\lambda\in(0, 1)$ and $p\in(1,\infty)$.

In what follows, for any $q\in(0,\infty)$, we use $L^q(\rr)$ to denote the set of all measurable
functions $f$ such that
$$\|f\|_{L^q(\rr)}:=\lf\{ \int_\rr|f(x)|^q\,dx\r\}^\frac{1}{q}<\infty.$$
Meanwhile, we use $L^\infty(\rr)$ to denote the set of all essentially bounded functions,
equipped with the following norm: for any $g\in L^\infty(\rr)$,
$$\|g\|_{L^\infty(\rr)}:=\inf_{\gfz{E\subset\rr}{|E|=0}}\sup_{x\in\rr\setminus E}|f(x)|.$$

Let us start with recalling the definition of the Cauchy integral operator.
\begin{definition}\label{Cauchy}
  Let $A$ be a Lipschitz function on $\rr$ [i.\,e., $A'=:a\in L^\infty(\rr)$] and
  let $$\Gamma:=\lf\{(t,A(t)):\,t\in\rr\r\}$$ be a plane curve. The \emph{Cauchy integral} $C_\Gamma(f)$
  for suitable function $f$ on $\rr$ is defined by setting
\begin{align*}
C_\Gamma(f)(x):=\,&\mathrm{p.\,v.\,}\frac{1}{\pi i}\int_{\rr}\frac{f(y)}{y-x+i[A(y)-A(x)]}\,dy,\,
\forall x\in\rr
\end{align*}
\end{definition}
Obviously, the kernel of $C_\Gamma$, denoted still by $C_\Gamma$, satisfies that
there exists a positive constant $C$ such that, for any $x,\,y\in\rr$,
\begin{enumerate}
    \item[{\rm(i)}]$|C_\Gamma(x,y)|\le C\frac{1}{|x-y|}\quad\mathrm{if}\quad |x-y|>0$;
    \item[{\rm(ii)}]$|C_\Gamma(x,y)-C_\Gamma(x,z)|+|C_\Gamma(y,x)-C_\Gamma(z,x)|\le C\frac{|y-z|}{|x-y|^{2}}\quad
       \mathrm{if}\quad |x-y|>2|y-z|\ge0$.
\end{enumerate}
Recalling that $C_\Gamma$ is bounded on $L^2(\rr)$ (\cite{CMM1982}), we then know that
$C_\Gamma$ is a standard Calder\'{o}n-Zygmund operator.

Now we recall the notions of $\BMO(\rr)$ and $\morrey(\rr)$ as follows.
\begin{definition}\label{BMO}
A locally integrable real-valued function $f :\,\,\rr \to \rr$ is said to be of
\emph{bounded mean oscillation}, denoted as $f\in\BMO(\rr)$, if
$$\|f\|_{\BMO(\rr)}:=\sup_{I\subset\rr}M(f,I)<\infty,$$
where $I$ is any interval of $\rr$,
$$M(f,I):=\frac{1}{|I|}\int_I\lf|f(x)-f_I\r|\,dx,\quad f_I:=\frac{1}{I}\int_If(y)\,dy,$$
and the supremum is taken over all intervals $I\subset\rr$.
\end{definition}

\begin{definition}
  Let $p\in(1,\infty)$ and $\lambda\in(0,1)$. The \emph{Morrey space} $\morrey(\rr)$ is defined
  by setting
  $$\morrey(\rr):=\lf\{f\in L_\mathrm{loc}^p(\rr):\,\,\|f\|_{\morrey(\rr)}<\infty \r\},$$
  where
  $$\|f\|_{\morrey(\rr)}:=\sup_{\substack{x\in\rr\\r\in(0,\infty)}}\lf[\frac{1}{r^\lambda}
  \int_{I(x,\,r)}|f(y)|^p\,dy\r]^{1/p}$$
  and, for any $x\in\rr$ and $r\in(0,\infty)$,
the interval $I(x,r):=\{y\in\rr:\,\,|y-x|<r \}$.
\end{definition}

Our first main result reads as follows.

\begin{theorem}\label{thm1}
Let $b \in L_{\loc}^{1}(\rr)$.
Then the following statements are mutually equivalent:
\begin{itemize}
  \item [{\rm (i)}] $b\in \BMO(\rr)$.
  \item [{\rm (ii)}] The commutator $[b,C_{\Gamma}]$ is
bounded on $\morrey(\rr)$ for any $p \in (1,\infty)$ and $\lambda\in(0,1)$.
  \item [{\rm(iii)}] The commutator $[b,C_{\Gamma}]$ is
bounded on $\morrey(\rr)$ for some $p \in (1,\infty)$ and $\lambda\in(0,1)$.
\end{itemize}
Moreover, for any given $p \in (1,\infty)$ and $\lambda\in(0,1)$, there exist positive
constants $c$ and $C$, depending on $p$ and $\lambda$, such that
$$\|b\|_{\BMO(\rr)}/c\le \lf\|[b,C_{\Gamma}]\r\|_{\morrey(\rr)\to\morrey(\rr)}\le C\|b\|_{\BMO(\rr)},$$
here and hereafter, $\|[b,C_\Gamma]\|_{\morrey(\rr)\to\morrey(\rr)}$ denotes
the operator norm of $[b,C_\Gamma]$ on $\morrey(\rr)$.
\end{theorem}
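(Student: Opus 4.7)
The plan is to prove the cycle (i) $\Rightarrow$ (ii) $\Rightarrow$ (iii) $\Rightarrow$ (i). The implication (ii) $\Rightarrow$ (iii) is immediate. For (i) $\Rightarrow$ (ii), I would invoke the theorem of Di~Fazio--Ragusa \cite{dr}: since the kernel $C_\Gamma(x,y)$ satisfies the size and regularity conditions (i) and (ii) listed after Definition \ref{Cauchy} and $C_\Gamma$ is bounded on $L^2(\rr)$ by Coifman--McIntosh--Meyer \cite{CMM1982}, $C_\Gamma$ is a standard Calder\'{o}n--Zygmund operator, so for any $b \in \BMO(\rr)$ the commutator $[b,C_\Gamma]$ is bounded on every $\morrey(\rr)$ with norm $\lesssim \|b\|_{\BMO(\rr)}$, giving the upper half of the claimed norm equivalence.

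The core of the argument is (iii) $\Rightarrow$ (i), which I would carry out by adapting the Lebesgue-space scheme of Li et al.\ \cite{LNWW2017} to the Morrey setting. Set $M := \|A'\|_{L^\infty(\rr)}$. The first ingredient is a kernel lower bound: from the factorization
$$C_\Gamma(x,y) = \frac{1}{\pi i(y-x)} \cdot \frac{1}{1 + i\,\Delta_{x,y}A},\qquad \Delta_{x,y}A := \frac{A(y)-A(x)}{y-x} \in [-M, M],$$
the argument of $C_\Gamma(x,y)$ lies in a cone strictly contained in a half-plane, so there exist $\gamma_0 = \gamma_0(M) \geq 4$, $\theta_0 = \theta_0(M)$, and $c(M) > 0$ with
$$\Re\!\lf[e^{i\theta_0} C_\Gamma(x,y)\r] \geq \frac{c(M)}{r}\quad\text{whenever}\ x \in I(x_0, r),\ y \in \tilde I := I(x_0 + \gamma_0 r,\, r),$$
uniformly in $x_0 \in \rr$ and $r \in (0, \infty)$. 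Given any interval $I := I(x_0, r)$, I would test (iii) on $f := \chi_{\tilde I}$, whose Morrey norm satisfies $\|f\|_{\morrey(\rr)} \lesssim r^{(1-\lambda)/p}$, and exploit the usual commutator decomposition
$$\chi_I(x)[b, C_\Gamma](f)(x) = \chi_I(x)[b(x) - b_{\tilde I}]\!\int_{\tilde I}\! C_\Gamma(x,y)\,dy - \chi_I(x)\!\int_{\tilde I}\![b(y)-b_{\tilde I}] C_\Gamma(x,y)\,dy.$$
Pairing with a unimodular function $g := \mathrm{sgn}(\Re[e^{i\theta_0}(b - m_I)])\,\chi_I$, where $m_I$ is an appropriate median in the sense of Uchiyama \cite{U1978}, the kernel lower bound forces the pairing with the first piece to dominate $\frac{c}{r}\int_I |b - b_I|\,dx$ up to lower-order terms, and bounding the total pairing by $|I|^{1/p'} r^{\lambda/p}\|[b, C_\Gamma]f\|_{\morrey(\rr)}$ via H\"older and the Morrey-to-$L^p$ restriction inequality $\|h\|_{L^p(I)} \leq r^{\lambda/p}\|h\|_{\morrey(\rr)}$ yields $M(b, I) \lesssim \|[b, C_\Gamma]\|_{\morrey(\rr)\to\morrey(\rr)}$ with constants independent of $I$. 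The exponent arithmetic $1/p' + \lambda/p + (1-\lambda)/p = 1$ cancels precisely with the $1/r$ from the kernel lower bound, producing the right $r$-scaling.

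The main technical obstacle will be the second integral in the decomposition above, which involves $b - b_{\tilde I}$ on $\tilde I$ and is not a priori dominated by the commutator norm. The cure, which I would adopt from \cite{U1978, LNWW2017}, is the median-based choice of $g$: the sign-balance $|\{g = \pm 1\} \cap I| \geq |I|/2$ lets one replace $f$ by the pieces $\chi_{\tilde I \cap \{\pm(b - b_{\tilde I}) > 0\}}$ and then combine the two resulting pairings so that the unwanted term cancels. The genuinely new work relative to \cite{LNWW2017} is to check that this cancellation, originally argued through $L^{p'}$ duality, survives on the Morrey scale. I plan to arrange this by keeping every test function supported in $I \cup \tilde I$ and converting Morrey norms to local $L^p$ norms via the restriction inequality above, so that the scaling $r^{(1-\lambda)/p}$ from $\|f\|_{\morrey(\rr)}$ matches the factor $r^{\lambda/p}$ from the restriction and produces the required $r$-independent bound on $M(b, I)$.
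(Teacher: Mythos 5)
Your proposal is correct in substance, but for the crucial implication (iii) $\Rightarrow$ (i) it follows a genuinely different route from the paper. The paper argues via local mean oscillations in the style of \cite{LOR2017}: Lemma \ref{product} selects subsets $E\subset I$ and $F\subset\widetilde I:=I(x_0+4r,r)\subset 5I$, built from the median $\alpha_{\widetilde I}(b)$, on which $b(x)-b(y)$ is single-signed and dominates $\omega_{1/8}(b;I)$; since $x-y$ has fixed sign on $I\times\widetilde I$, the factor $1/(x-y)$ is comparable to the real part $(x-y)/\{(x-y)^2+[A(x)-A(y)]^2\}$ of the Cauchy kernel, whence $\omega_{1/8}(b;I)\lesssim |I|^{-1}\int_E|[b,C_\Gamma]\chi_F(x)|\,dx$, and H\"older together with the Morrey restriction inequality gives an $I$-independent bound; membership in $\BMO(\rr)$ then follows from Lemma \ref{lmo}. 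You instead run the Uchiyama/\cite{LNWW2017} scheme directly: a rotated lower bound $\Re[e^{i\theta_0}C_\Gamma(x,y)]\gtrsim 1/r$ on $I\times\widetilde I$, testing on characteristic functions, and a direct estimate of $M(b,I)$. Both routes rest on the same two facts (single-signedness and size $1/r$ of the kernel's real part on separated intervals, and the exponent bookkeeping $1/p'+\lambda/p+(1-\lambda)/p=1$), so your approach works and is arguably more self-contained, since it does not need the local-oscillation lemma of \cite{LOR2017}; the paper's packaging buys a shorter argument that avoids the explicit rotation and the two-term commutator decomposition altogether.

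Three details to repair when writing this up. First, the constant used to split $\widetilde I$ must be a median of $b$ over $\widetilde I$ (the paper's $\alpha_{\widetilde I}(b)$), not the mean $b_{\widetilde I}$ nor a median over $I$: only then do both pieces $F_\pm\subset\widetilde I$ have measure $\ge|\widetilde I|/2$, which you need for the factor $|F_\pm|/r\gtrsim1$. Once you test on $\chi_{F_\pm}$ and pair with the matching halves of $I$, the integrand $[b(x)-b(y)]\,\Re[e^{i\theta_0}C_\Gamma(x,y)]$ is already single-signed and dominates $|b(x)-\alpha_{\widetilde I}(b)|/r$, so the ``unwanted second term'' never appears and no cancellation is needed; by contrast, with $f=\chi_{\widetilde I}$ and the mean $b_{\widetilde I}$ the leftover term would have to be absorbed against an oscillation of $b$ that you have not yet proved finite, which is circular. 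Second, as written $g=\mathrm{sgn}(\Re[e^{i\theta_0}(b-m_I)])\chi_I$ is degenerate for real-valued $b$ when $\theta_0=\pi/2$; the rotation should act on the commutator (equivalently the kernel), with $g=\mathrm{sgn}(b-\alpha_{\widetilde I}(b))\chi_I$. Third, for (i) $\Rightarrow$ (ii), the theorem of \cite{dr} is formulated for convolution-type Calder\'on--Zygmund operators, while $C_\Gamma$ is of non-convolution type; it is safer to do as the paper does and combine the $L^p(\rr)$ boundedness of $[b,C_\Gamma]$ from \cite{LNWW2017} with a transference result valid for general Calder\'on--Zygmund operators (\cite{FLY1999,KS2009}), or to rerun the sharp-maximal-function argument. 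These are fixable slips, not gaps in the method.
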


Let $H^1(\rr)$ be the classical \emph{atomic Hardy space} (see, for example, \cite{Duo}). That is,
$$H^1(\rr):=\lf\{f\in L^1(\rr):\,\,f=\sum_{j=1}^\infty\lambda_j a_j,\,\,\{a_j\}_{j=1}^\infty
\,\,\mathrm{are\,\,atoms},\,\,\{\lambda_j\}_{j=1}^\infty\subset\mathbb{C}\,\,\mathrm{and}
\,\,\sum_{j=1}^\infty|\lambda_j|<\infty \r\}$$
equipped with the norm
$$\|f\|_{H^1(\rr)}:=\inf\lf\{\sum_{j=1}^\infty\lf|\lambda_j\r| \r\},$$
where the infimum is taken over all possible decompositions of $f$ as above.
Here an \emph{atom}  $a$ is a function in $L^1(\rr)$ which is supported in an interval $I:=I(x,r)$,
with $x\in\rr$ and $r\in(0,\infty)$, and satisfies that
$$\|a\|_{L^\infty(\rr)}\le r^{-1}\quad\mathrm{and}\quad\int_I a(x)\,dx=0.$$
As a corollary of Theorem \ref{thm1} and the well-known fact that $[H^1(\rr)]^\ast=\BMO(\rr)$ (see
Fefferman and Stein \cite{FS1972}), the second main result of this article concerns a factorization
of $H^1(\rr)$ via $C_\Gamma$ and $C_\Gamma^\ast$, here and hereafter, for any linear operator $T$,
$T^\ast$ denotes the \emph{adjoint operator} of $T$. We begin with recalling the notion of blocks
in \cite{BRV1999}.
For more properties of blocks and related spaces, see  \cite{L1984,BRV1999, KM2006, RT2015}.

\begin{definition}\label{block}
Let $q\in(1,\infty)$ and $\lambda\in(0,1)$. A function $b$ is called a \emph{$(\lambda,q)$-block} if there
exists an interval $I(x_0,r)$, with $x_0\in\rr$ and $r\in(0,\infty)$, such that
$$\supp(b)\subset I(x_0,r)\quad \mathrm{and}\quad \|b\|_{L^q(\rr)} \le |I(x_0,r)|^{-\frac{\lambda}{q'}},$$
where $1/q+1/q'=1$.
\end{definition}

We further recall the definition of $h^{\lambda,\ q}(\rr)$ via $(\lambda,\,q)$-blocks from \cite{BRV1999}.

\begin{definition}\label{hspace}
Let $q\in(1,\infty)$ and $\lambda\in(0,1)$. The \emph{space $h^{\lambda,\,q}(\rr)$} is defined by setting
\begin{align*}
h^{\lambda,\,q}(\rr):=&\left\{g\in L^1_{\rm loc}(\rr):\,\,g=\sum_{j=1}^\infty m_j u_j,
\ \{u_j\}_{j=1}^\infty\ \mathrm{are} \ (\lambda,q)\mathrm{-blocks},\right.\\
&\quad\quad\quad\lf.\{m_j\}_{j=1}^\infty\subset\cc\ \mathrm{and}\ \sum_{j=1}^\infty \lf|m_j\r|<\infty \right\}.
\end{align*}
Moreover, for any $g\in h^{\lambda,\ q}(\rr)$, let
$$\|g\|_{h^{\lambda,\,q}(\rr)}:=\inf \lf\{\sum_{j=1}^\infty\lf|m_j\r|\r\},$$
where the infimum is taken over  all possible decompositions of $g$ as above.
\end{definition}

It was showed in \cite{BRV1999} that $h^{\lambda,\,q}(\rr)$ is a Banach space and the dual space of
$h^{\lambda,\,q}(\rr)$ is $L^{q',\,\lz}(\rr)$; see also \cite{L1984,KM2006,RT2015}.
Now we state the factorization of $H^1(\rr)$ in terms of $h^{\lambda,\,q}(\rr)$ and $L^{q',\,\lz}(\rr)$
as follows; for the case of factorization in terms of Calder\'on-Zygmund operators $T$ of
convolution type and generalized Morrey spaces and their predual spaces on $\rn$,
we refer the reader to \cite[Theorem 3.2]{KM2006}. In what follows, we use $L_c^\infty(\rr)$ to
denote the set of all $L^\infty(\rr)$ functions with compact supports.

\begin{theorem}\label{fac}
 For any $f\in H^1(\rr)$, there exist $\{\lambda_k^l\}_{k,\,l\in\nn}\subset\mathbb{C}$ and
 functions $\{g_k^l\}_{k,\,l\in\nn},\,\,\{h_k^l\}_{k,\,l\in\nn}\subset L_c^{\infty}(\rr)$ such that
 $f=\sum_{l=1}^\infty\sum_{k=1}^\infty \lambda_k^l(g_k^l C_\Gamma^\ast h_k^l-h_k^l C_\Gamma g_k^l)$
 in $H^1(\rr)$ and
  $$\|f\|_{H^1(\rr)}\sim\inf\lf\{\sum_{l=1}^\infty\sum_{k=1}^\infty\lf|\lambda_k^l\r|\lf\|g_k^l\r\|_
  {\morrey(\rr)}\lf\|h_k^l\r\|_{h^{\lambda,\,p'}(\rr)}:\,\,f=\sum_{l=1}^\infty\sum_{k=1}^\infty \lambda_k^l\lf(g_k^l C_\Gamma^\ast h_k^l-h_k^l C_\Gamma g_k^l\r)\r\}$$
 with the equivalent positive constants independent of $f$.
\end{theorem}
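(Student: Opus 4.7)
The plan is to establish Theorem~\ref{fac} by marrying the two halves of Theorem~\ref{thm1}: the boundedness direction produces the \lq\lq easy\rq\rq{} upper bound on the factorization norm through straight duality, while the BMO-lower-bound direction drives a Coifman--Rochberg--Weiss-style atomic iteration that yields the explicit double sum. Throughout I abbreviate $\Phi(g,h):=gC_\Gamma^\ast h-hC_\Gamma g$.

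For the upper bound, I would start with the key pairing identity: for $b\in L^\infty(\rr)$ and $g,\,h\in L_c^\infty(\rr)$, Fubini's theorem combined with the adjoint identity $\int_\rr uC_\Gamma v\,dx=\int_\rr vC_\Gamma^\ast u\,dx$ yields
$$\int_\rr b(x)\Phi(g,h)(x)\,dx=-\int_\rr h(x)\,[b,C_\Gamma]g(x)\,dx.$$
The $(\morrey(\rr),\,h^{\lambda,\,p'}(\rr))$-duality of \cite{BRV1999} bounds the right-hand side by $\|h\|_{h^{\lambda,\,p'}(\rr)}\|[b,C_\Gamma]g\|_{\morrey(\rr)}$, and Theorem~\ref{thm1} controls this by a constant times $\|b\|_{\BMO(\rr)}\|g\|_{\morrey(\rr)}\|h\|_{h^{\lambda,\,p'}(\rr)}$. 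Taking the supremum over $b$ of unit $\BMO(\rr)$-norm and invoking $[H^1(\rr)]^\ast=\BMO(\rr)$ gives
$$\|\Phi(g,h)\|_{H^1(\rr)}\ls\|g\|_{\morrey(\rr)}\|h\|_{h^{\lambda,\,p'}(\rr)},$$
whence, by the triangle inequality,
$$\|f\|_{H^1(\rr)}\ls\sum_{l,\,k}\lf|\lambda_k^l\r|\lf\|g_k^l\r\|_{\morrey(\rr)}\lf\|h_k^l\r\|_{h^{\lambda,\,p'}(\rr)}$$
for any convergent double-sum representation $f=\sum_{l,\,k}\lambda_k^l\Phi(g_k^l,h_k^l)$, giving the $\gs$-direction of the norm equivalence.

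The core of the proof is a single-atom approximation: for every classical $H^1(\rr)$-atom $a$ supported on $I:=I(x_0,r)$ there exist $g,\,h\in L_c^\infty(\rr)$ satisfying
$$\|g\|_{\morrey(\rr)}\|h\|_{h^{\lambda,\,p'}(\rr)}\le C_0\quad\mathrm{and}\quad\|a-\Phi(g,h)\|_{H^1(\rr)}\le\tfrac12,$$
where $C_0$ depends only on $p$, $\lambda$ and the Lipschitz constant of $A$. To build $g$ and $h$, I would fix a large separation parameter $M$ (to be calibrated), set $J:=I(x_0+Mr,r)$, and take $h:=|J|^{-\lambda/p-1/p'}\chi_J$; this is exactly a $(\lambda,\,p')$-block, hence $\|h\|_{h^{\lambda,\,p'}(\rr)}\le 1$. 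Since $C_\Gamma(y,x)$ varies slowly on $J\times I$ by the kernel estimate (ii), one obtains $C_\Gamma^\ast h(x)=\beta\,(1+O(1/M))$ uniformly on $I$ for a non-zero complex constant $\beta$ with $|\beta|\sim r^{-\lambda/p-1/p'}/M$. Then the choice $g:=\beta^{-1}a$ makes $gC_\Gamma^\ast h(x)-a(x)=O(1/M)\,a(x)$ on $I$ and $0$ off $I$, while on $J$ the term $-hC_\Gamma g(x)$ is controlled using the atom cancellation $\int a=0$ together with kernel smoothness to an $L^\infty(\rr)$-bound of order $1/(M^2r)$. Packaging these error pieces as scalar multiples of $H^1(\rr)$-atoms and summing gives an overall $H^1(\rr)$-error of order $1/M$; choosing $M$ large forces this below $1/2$, and the same $M$ keeps $\|g\|_{\morrey(\rr)}$ bounded by $C_0$ uniformly in $r$.

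Granted the lemma, the double sum arises from a standard atomic iteration: starting from an atomic decomposition $f=\sum_l\mu_la_l$ in $H^1(\rr)$, apply the lemma to each $a_l$ to extract a first pair $(g_1^l,h_1^l)$ and a remainder of $H^1(\rr)$-norm at most $\tfrac12\|a_l\|_{H^1(\rr)}$, then re-atomize the remainder and repeat. The resulting double sum converges in $H^1(\rr)$ to $f$, and a geometric series yields $\sum_{l,\,k}|\lambda_k^l|\|g_k^l\|_{\morrey(\rr)}\|h_k^l\|_{h^{\lambda,\,p'}(\rr)}\ls\|f\|_{H^1(\rr)}$. The main obstacle is the calibration in the single-atom step: the block normalization of $h$, the scaling of $g$, and the separation $M$ must all be tuned so that $\|g\|_{\morrey(\rr)}$ stays uniformly bounded (especially as $r\to 0$) \emph{while} the two pointwise error contributions on $I$ and on $J$ both decay in $M$. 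Tracking the $r$-scaling through the explicit Cauchy kernel makes this computation delicate but routine, and once it is in hand the remainder of the proof is a formal packaging of atomic decompositions.
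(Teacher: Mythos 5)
Your overall architecture is the same as the paper's: the upper bound by pairing $gC_\Gamma^\ast h-hC_\Gamma g$ against $b$ and quoting Theorem \ref{thm1}, and the lower bound by approximating a single $H^1(\rr)$-atom by one term $\Phi(g,h)$ with $\|g\|_{\morrey(\rr)}\|h\|_{h^{\lambda,\,p'}(\rr)}$ under control, followed by the geometric re-atomization. The only structural difference is cosmetic: you place the atom in the Morrey slot ($g=\beta^{-1}a$) and the normalized indicator in the block slot, whereas the paper takes $g:=\chi_{I(y_0,r)}$ and $h:=-a/C_\Gamma g(x_0)$; both assignments scale correctly (your product of norms is $\lesssim M$, the paper's is $\le C_1N$).

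There are, however, genuine gaps in the two places where the real work sits. First, the error estimate: you propose to bound $\|a-\Phi(g,h)\|_{H^1(\rr)}$ by ``packaging these error pieces as scalar multiples of $H^1(\rr)$-atoms,'' but the piece supported on $I$ and the piece supported on $J$ do \emph{not} individually have vanishing integral, so they are not multiples of atoms and this packaging fails as stated. What is true is only the joint cancellation $\int_\rr[a-\Phi(g,h)]=0$ (which you never verify; it follows from $\int_\rr a=0$ and the adjoint identity giving $\int_\rr\Phi(g,h)=0$), and one then needs precisely the paper's Lemma \ref{H1} (from \cite{KM2006}): a mean-zero function dominated by $\frac{C}{Mr}[\chi_{I(x_0,r)}+\chi_{I(x_0+Mr,r)}]$ has $H^1(\rr)$-norm $\lesssim M^{-1}\log M$, not $M^{-1}$. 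The conclusion survives (choose $M$ with $CM^{-1}\log M<\frac12$), but the step as you wrote it is wrong, and the logarithmic loss is exactly why the paper needs that lemma. (Incidentally, your $O(1/(M^2r))$ bound for $hC_\Gamma g$ on $J$ should be $O(1/(Mr))$ once the factor $|\beta|^{-1}\sim Mr^{\lambda/p+1/p'}$ is taken into account; this is harmless.) Second, the claim $C_\Gamma^\ast h=\beta\,(1+O(1/M))$ with the \emph{lower} bound $|\beta|\gtrsim r^{-\lambda/p-1/p'}/M$ does not follow from the smoothness estimate (ii) alone, which only gives slow variation; one needs the non-degeneracy of the Cauchy kernel on separated intervals (the real part $\frac{y-x}{(y-x)^2+[A(y)-A(x)]^2}$ has fixed sign and size $\gtrsim|x-y|^{-1}$), i.e., the $1$-$1$-homogeneity the paper imports from \cite[Lemma 3.4]{LNWW2017} to get \eqref{1nh}. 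Finally, in the upper bound, testing against $b$ and invoking $[H^1(\rr)]^\ast=\BMO(\rr)$ is not by itself enough to place $\Phi(g,h)$ in $H^1(\rr)$ (that duality runs the wrong way); the paper correctly uses the predual duality $H^1(\rr)=[\CMO(\rr)]^\ast$ from \cite[Theorem 4.1]{CW1977}, and your argument should be repaired accordingly.
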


Let $\CMO(\rr)$ be the BMO($\rr$)-closure of $C_c^\infty(\rr)$, which was introduced by
Neri \cite{N1975}; see also \cite{U1978}. Based on Theorem \ref{thm1}, we also obtain
the compactness characterization of the commutator $[b, C_\Gamma]$ in terms of
$\CMO(\rr)$ functions, which is the third main result of this article.

\begin{theorem}\label{thm2}
Let $b \in \BMO(\rr)$. Then the following statements are mutually equivalent:
\begin{itemize}
  \item [{\rm (i)}] $b \in \CMO(\rr)$.
  \item [{\rm (ii)}] The commutator $[b,C_{\Gamma}]$ is compact on $\morrey(\rr)$ for any $p \in (1,\infty)$  and $\lambda\in(0,1)$.
  \item [{\rm (iii)}] The commutator $[b,C_{\Gamma}]$ is compact on $\morrey(\rr)$ for some $p \in (1,\infty)$  and $\lambda\in(0,1)$.
\end{itemize}
\end{theorem}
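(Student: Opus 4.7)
The plan is to follow the Uchiyama-type scheme (as in \cite{U1978,CDW2012,LNWW2017}) adapted to the Morrey setting, using Theorem \ref{thm1} as the bounded backbone. The implication (ii)$\Rightarrow$(iii) is trivial, so two nontrivial directions remain.

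For (i)$\Rightarrow$(ii), I would first prove compactness of $[b,C_\Gamma]$ on $\morrey(\rr)$ when $b\in C_c^\infty(\rr)$, and then upgrade to $b\in\CMO(\rr)$ by a density/closedness argument. Concretely, pick $\{b_n\}_{n\in\nn}\subset C_c^\infty(\rr)$ with $b_n\to b$ in $\BMO(\rr)$; by the linearity of the commutator and Theorem \ref{thm1},
\begin{equation*}
\lf\|[b,C_\Gamma]-[b_n,C_\Gamma]\r\|_{\morrey(\rr)\to\morrey(\rr)}
=\lf\|[b-b_n,C_\Gamma]\r\|_{\morrey(\rr)\to\morrey(\rr)}\ls \|b-b_n\|_{\BMO(\rr)}\to 0,
\end{equation*}
so once each $[b_n,C_\Gamma]$ is compact, the limit $[b,C_\Gamma]$ is compact because the space of compact operators on a Banach space is closed in operator norm. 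The compactness for smooth compactly supported $b_n$ I would obtain from a Kolmogorov--Riesz--Fr\'echet type precompactness criterion on $\morrey(\rr)$ (an analogue of the one established in \cite{CDW2012}), which requires verifying three conditions uniformly over $\{f\in\morrey(\rr):\,\|f\|_{\morrey(\rr)}\le1\}$: uniform Morrey smallness of $[b_n,C_\Gamma]f$ far from the support of $b_n$; uniform Morrey smallness near any fixed point on small scales; and uniform translation continuity $\|\tau_h([b_n,C_\Gamma]f)-[b_n,C_\Gamma]f\|_{\morrey(\rr)}\to 0$ as $|h|\to 0$. Each of these follows from the standard Calder\'on--Zygmund size and regularity estimates of $C_\Gamma$ (listed after Definition \ref{Cauchy}) combined with the smoothness and compact support of $b_n$ and the Morrey bound from Theorem \ref{thm1}.

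For (iii)$\Rightarrow$(i), the hypothesis and Theorem \ref{thm1} first give $b\in\BMO(\rr)$, so it suffices to exclude $b\in\BMO(\rr)\setminus\CMO(\rr)$. I would use the standard description of $\CMO(\rr)$ as those $b\in\BMO(\rr)$ satisfying the three vanishing conditions
\begin{equation*}
\lim_{a\to 0^+}\sup_{|I|=a}M(b,I)=0,\quad
\lim_{a\to\infty}\sup_{|I|=a}M(b,I)=0,\quad
\lim_{|y|\to\infty}\sup_{I}M(\tau_y b-b,I)=0.
\end{equation*}
If any of these fails for $b$, I would, mimicking the construction of \cite[Section 3]{LNWW2017} on $L^p(\rr)$, build a sequence of normalized Morrey functions $\{f_k\}_{k\in\nn}\subset\morrey(\rr)$ supported on carefully chosen intervals $\{I_k\}_{k\in\nn}$ (either shrinking, expanding, or mutually far apart, according to which of the three limits fails) so that $[b,C_\Gamma]f_k$ is, up to a controlled error, a suitable integral average of $b-b_{I_k}$ on a dilate of $I_k$, yielding $\|[b,C_\Gamma]f_k-[b,C_\Gamma]f_j\|_{\morrey(\rr)}\gs 1$ for $k\ne j$, which contradicts the precompactness of $\{[b,C_\Gamma]f_k\}_{k\in\nn}$ in $\morrey(\rr)$. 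The lower-bound estimate uses the explicit form of the Cauchy kernel together with the standard trick of choosing $f_k$ essentially as the sign of $\Re\{(x-y+i[A(y)-A(x)])^{-1}\}$ on a fixed sub-interval, so that the real part of the kernel is bounded below by a positive constant on the relevant region.

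The main obstacle I anticipate is the precompactness criterion in $\morrey(\rr)$ used in both directions: unlike $L^p$, Morrey functions are not equi-integrable, so the ``smallness at infinity'' condition must be formulated in the correct Morrey sense and matched with the lower bounds produced by the test functions in (iii)$\Rightarrow$(i). A secondary technical issue is that $C_\Gamma$ is of non-convolution type, so in the necessity argument the Cauchy kernel $1/(y-x+i[A(y)-A(x)])$ must be handled by the real-part trick above, using only the $L^\infty$ bound on $A'$ to keep the arguments uniform in the position of $I_k$; the boundedness of $C_\Gamma$ itself on $\morrey(\rr)$, which follows from Theorem \ref{thm1} applied to $b\equiv 1$ after elementary Calder\'on--Zygmund arguments, will absorb the error terms throughout.
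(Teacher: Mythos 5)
Your overall scheme is the paper's: (ii)$\Rightarrow$(iii) is trivial; for (i)$\Rightarrow$(ii) the paper also approximates $b$ in $\BMO(\rr)$ by $C_c^\infty(\rr)$ functions, uses Theorem \ref{thm1} to pass compactness through the operator-norm limit, and checks the Morrey-space Fr\'echet--Kolmogorov criterion of \cite{CDW2012}; this part of your sketch is essentially correct, with the caveat that the near-diagonal piece of the translation estimate (the term $\int_{|x-y|>\epsilon^{-1}|z|}C_\Gamma(x,y)[b(x)-b(x+z)]f(y)\,dy$) is not controlled by kernel size and regularity alone: one needs the boundedness on $\morrey(\rr)$ of the maximal truncated operator $C_{\Gamma,*}$ (and of the Hardy--Littlewood maximal operator), which the paper proves as a separate lemma.

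The genuine gap is in the lower bound for (iii)$\Rightarrow$(i). You propose test functions $f_k$ supported in $I_k$ whose sign is dictated by $\Re$ of the Cauchy kernel; but for $x$ lying to one side of $I_k$ that real part already has constant sign, so your $f_k$ is just a normalized indicator, and then for $x$ at distance comparable to $A|I_k|$ from $I_k$ one gets $[b,C_\Gamma]f_k(x)\approx |I_k|^{1-(1-\lambda)/p}(x-x_k)^{-1}\,[b(x)-b_{I_k}]$ up to controlled errors. A lower bound for this on a dilate of $I_k$ would require $|b(x)-b_{I_k}|$ to be large \emph{off} $I_k$, which does not follow from $M(b,I_k)>\delta$: if $b$ oscillates inside $I_k$ and is constant (equal to $b_{I_k}$) outside, the main term vanishes identically on the region where you integrate, and the error terms decay like $|x-x_k|^{-2}$, so no contradiction with compactness is produced. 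The missing idea is to adapt $f_k$ to the sign of $b-\alpha_{I_k}(b)$ (a median of $b$ on $I_k$): the paper takes $f_k=|I_k|^{-(1-\lambda)/p}(\chi_{\{b>\alpha_{I_k}(b)\}}-\chi_{\{b<\alpha_{I_k}(b)\}}-a_k\chi_{I_k})$ with $\int_\rr f_k=0$, so that $[b-\alpha_{I_k}(b)]f_k\ge0$ on $I_k$; then the single sign of $\Re C_\Gamma(x,y)$ for $x$ to the right of $I_k$ gives, with no cancellation, $|C_\Gamma([b-\alpha_{I_k}(b)]f_k)(x)|\gtrsim \delta|I_k|^{(p-1+\lambda)/p}/(2^kr_k)$ on the interval $(x_k+2^kr_k,x_k+2^{k+1}r_k)$, while the mean-zero correction, kernel regularity and John--Nirenberg control the remaining term $[b-\alpha_{I_k}(b)]C_\Gamma f_k$; these are exactly the two-sided bounds needed to run the three-case contradiction. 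Two smaller inaccuracies: your third condition in the $\CMO(\rr)$ characterization, $\sup_IM(\tau_yb-b,I)\to0$ as $|y|\to\infty$, is not Uchiyama's condition and fails even for $b\in C_c^\infty(\rr)$ (the correct condition concerns intervals escaping to infinity, which is what your ``mutually far apart'' case actually uses); and the Morrey boundedness of $C_\Gamma$ cannot be deduced from Theorem \ref{thm1} with $b\equiv1$, since commutators with constants vanish --- it follows instead from the $L^p(\rr)$ boundedness of $C_\Gamma$ together with the standard Morrey argument.
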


An outline of this article is in order. Section \ref{s2} is divided into two subsections.
We first give the proof of Theorem \ref{thm1} in Subsection \ref{s2.1} and then the proof of
Theorem \ref{fac} in Subsection \ref{s2.2}. In the proof of Theorem \ref{thm1},
inspired by \cite{LOR2017}, we obtain an  auxiliary result suitable for $C_\Gamma$
(see Lemma \ref{product} below), which is on the domination of the local mean oscillation of $b$
on a given interval $I$ by the difference of $|b(x)-b(y)|$ pointwise on subsets of
$I\times 5I$, where, for any given interval $I:=I(x, r)$ with $x\in\rr$ and $r\in(0,\infty)$,
$5I:=I(x, 5r)$. Compared with the argument used in the proof of \cite[Proposition 3.1]{LOR2017}
therein, the argument used here is simpler due to the specific structure of the kernel $C_\Gamma$.

Section \ref{s3} is devoted to the proof of Theorem \ref{thm2} and is splitted into two subsections.
We mention that the implication relation from (ii) to (iii) of Theorem \ref{thm2} is obvious.
In Subsection \ref{s3.1}, by using a variant of the Fr\'{e}chet-Kolmogorov theorem, which
was obtained by  \cite[Theorem 1.12]{CDW2012} and suitable for $\morrey(\rr)$, and via establishing
the boundedness of the maximal operator of the truncated Cauchy integral on $\morrey(\rr)$,
we show the implication relation from (i) to (ii) of Theorem \ref{thm2}.
On the other hand, in Subsection \ref{s3.2}, we first obtain a lemma for the upper and the lower
bounds of integrals of $[b, C_\Gamma]f_j$ on certain intervals, for $b\in\BMO(\rr)$ and proper
function $f_j$. Using this  and a contradiction argument via an equivalent characterization
of CMO$(\rr)$ established by Uchiyama \cite{U1978}, we give the proof of the implication
relation from (iii) to (i) of Theorem \ref{thm2}.

Finally, we make some conventions on notation. Throughout the article,
we denote by $C$ and $\widetilde{C}$ {positive constants} which
are independent of the main parameters, but they may vary from line to
line. Constants with subscripts, such as $C_0$ and $A_1$, do
not change in different occurrences. Moreover, the symbol $f\lesssim g$ represents that
$f\le Cg$ for some positive constant $C$. If $f\lesssim g$ and $g\lesssim f$,
we then write $f\sim g$.

\section{Proofs of Theorems \ref{thm1} and \ref{fac}}\label{s2}

This section is divided into two subsections. We first present the proof of
Theorem \ref{thm1} in Subsection \ref{s2.1}
and, as an application of Theorem \ref{thm1}, we further give the proof of
Theorem \ref{fac} in Subsection \ref{s2.2}.

\subsection{Proof of Theorem \ref{thm1}}\label{s2.1}
Inspired by the recent work \cite{LOR2017}, we show Theorem \ref{thm1}
by means of the so-called local mean oscillations of functions. Recall that,
given a measurable function $f$ on $\rr$ and an interval $I\subset\rr$, the
\emph{local mean oscillation} $\omega_\mu(f;I)$ of $f$ on $I$ is defined by setting
$$\omega_\mu(f;I):=\inf_{c\in\rr}\lf[(f-c)\chi_I\r]^*(\mu|I|),$$
where $\mu\in(0,1)$ and $f^*$ denotes the non-increasing rearrangement of $f$, i.\,e.,
for any $t\in[0,\infty)$,
$$f^*(t):=\inf\lf\{\alpha\in(0,\infty):\,\,|\{x\in\rr:|f(x)|>\alpha\}|<t\r\}.$$

\begin{lemma}[\cite{LOR2017}, Lemma 2.1]\label{lmo}
  Let $\mu\in(0,\frac{1}{8}]$ and $f$ be a measurable function on $\rr$. If
  $$\sup_{I\subset\rr} \omega_\mu(f;I)<\infty,$$ then
  $f\in \BMO(\rr)$ and $$\|f\|_{\BMO(\rr)}\lesssim\sup_{I\subset\rr} \omega_\mu(f;I).$$
\end{lemma}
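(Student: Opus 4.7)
The plan is to reduce the claim to producing, for every interval $I\subset\rr$, a constant $c_I\in\rr$ with $\frac1{|I|}\int_I|f-c_I|\,dx\ls K$, where $K:=\sup_{J\subset\rr}\omega_\mu(f;J)$. Indeed, from $|f-f_I|\le|f-c_I|+|c_I-f_I|$ and $|c_I-f_I|\le\frac1{|I|}\int_I|f-c_I|\,dx$, such an estimate would immediately give $M(f,I)\ls K$ uniformly in $I$, so that $f\in\BMO(\rr)$ with the required norm bound.

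For a fixed $I$, I would use the definition of the non-increasing rearrangement to choose $c_I\in\rr$ with
$$\lf|\lf\{x\in I:\,|f(x)-c_I|>K\r\}\r|\le\mu|I|,$$
and then upgrade this into an $L^1$-average bound by establishing, via induction on $k\in\nn\cup\{0\}$, the exponential-decay estimate
$$\lf|\lf\{x\in I:\,|f(x)-c_I|>(2k+1)K\r\}\r|\le A\,2^{-k}|I|$$
for some absolute constant $A$. The base case $k=0$ is the above choice of $c_I$. For the inductive step, I would perform a Calder\'on--Zygmund stopping-time decomposition at threshold $\tfrac14$ of the level-$k$ exceptional set inside $I$, producing disjoint maximal dyadic subintervals $\{I_j\}$ of $I$ on which the exceptional set occupies a fraction in $(\tfrac14,\tfrac12]$, so that $\sum_j|I_j|\le 4A\,2^{-k}|I|$ and $|\{x\in I_j:\,|f(x)-c_I|\le(2k+1)K\}|\ge\tfrac12|I_j|$. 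On each $I_j$ I then select $c_{I_j}$ with $|\{x\in I_j:\,|f(x)-c_{I_j}|>K\}|\le\mu|I_j|$ and invoke the hypothesis $\mu\le\tfrac18$ to ensure that this good set for $c_{I_j}$ and the above good set for $c_I$ overlap inside $I_j$ in measure at least $(\tfrac12-\mu)|I_j|>0$. The overlap yields the drift bound $|c_{I_j}-c_I|\le(2k+2)K$, whence $|f-c_I|\le(2k+3)K$ off a subset of $I_j$ of measure at most $\mu|I_j|$. Summing over $j$ gives the level-$(k+1)$ bound with constant $4\mu A\,2^{-k}\le A\,2^{-(k+1)}$, closing the induction.

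With the exponential decay in hand, the layer-cake identity
$$\int_I|f-c_I|\,dx=\int_0^\infty\lf|\lf\{x\in I:\,|f(x)-c_I|>t\r\}\r|\,dt\ls K|I|\sum_{k=0}^\infty 2^{-k}\ls K|I|$$
finishes the proof upon taking the supremum over $I$.

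The main obstacle I anticipate is the careful bookkeeping at the induction step: one must track the accumulation of constants $c_I\to c_{I_j}\to\cdots$ across the scales of the stopping-time tree and verify, crucially using $\mu\le\tfrac18$, that at every scale the parent and child good sets intersect in positive measure, and that the product of the stopping ratio $4$ with the drift measure $\mu$ exactly matches the geometric rate $\tfrac12$ so that the induction actually closes. Once this numerical balance is in place, both the layer-cake integration and the final supremum over $I$ are routine.
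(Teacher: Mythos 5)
Your proof is correct, and it is worth noting that the paper itself does not prove this lemma: it is quoted from \cite{LOR2017}, where the stated threshold $\mu\le\frac18$ comes from Lerner's local mean oscillation (sparse) decomposition, $|f(x)-m_f(I_0)|\lesssim\sum_{Q\in\mathcal S}\omega_{2^{-1-2}}(f;Q)\chi_Q(x)$ on an interval $I_0$, which one integrates over $I_0$ and combines with sparseness to get the $\BMO$ bound. Your route is different and more self-contained: you re-run the underlying John--Str\"omberg/Calder\'on--Zygmund machinery directly, proving a John--Nirenberg-type exponential decay $|\{x\in I:|f(x)-c_I|>(2k+1)K\}|\le A2^{-k}|I|$ and then using the layer-cake formula; this avoids invoking the decomposition theorem at the cost of the bookkeeping you describe. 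The numerics do close exactly as you say: the base case needs $A\ge\mu$, applicability of the stopping at relative threshold $\frac14$ at every level needs $A\le\frac14$ (so fix, say, $A=\frac18$), maximality gives parent ratio $\le\frac14$ and hence child ratio $\le\frac12$, the good sets overlap because $(1-\mu)+\frac12>1$, and the closure condition $4\mu\le\frac12$ is precisely $\mu\le\frac18$. Two routine points should be made explicit in a final write-up: the infimum defining $\omega_\mu(f;I)$ need not be attained, so either choose $c_I$ at level $K+\varepsilon$ and let $\varepsilon\to0$ at the end (or prove attainability by a short compactness argument using that any two admissible constants differ by at most $2(K+\varepsilon)$ since their good sets overlap), and the inclusion of the level set in the union of selected stopping intervals holds only up to a null set, via Lebesgue differentiation. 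With these details filled in, your argument is a complete and valid alternative to citing \cite{LOR2017}.
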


The following technical lemma is a variant of \cite[Proposition 3.1]{LOR2017}, which is
suitable for the Cauchy integral $C_\Gamma$.

\begin{lemma}\label{product} Let $b\in L^1_{\rm loc}(\rr)$.
  Then, for any interval $I\subset\rr$, there exist measurable sets $E\subset I$ and $F\subset5I$
  such that
  \begin{enumerate}
    \item[{\rm(i)}] if\ \ $G:=E\times F$, then $|G|\sim|I|^2;$
    \item[{\rm(ii)}] $\omega_{\frac{1}{8}}(b;I)\le|b(x)-b(y)|, \,\, \forall(x,y)\in E\times F;$
    \item[{\rm(iii)}] $b(x)-b(y)$ does not change sign in $E\times F$.
  \end{enumerate}
\end{lemma}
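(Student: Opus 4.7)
The plan is to build $E$ and $F$ explicitly from a median of $b$ on $I$. Set $\omega := \omega_{1/8}(b;I)$ and let $m$ be a median of $b$ on $I$, so that both sets $\{x \in I : b(x) \ge m\}$ and $\{x \in I : b(x) \le m\}$ have measure at least $|I|/2$. If $\omega = 0$ the conclusions (ii) and (iii) are trivial for any reasonable choice of $E$ and $F$, so I may assume $\omega > 0$.

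The first real step is to convert the oscillation lower bound into a distributional one. Applying the definition of $\omega_{1/8}(b;I)$ with the test constant $c = m$ gives
$$\omega \le \lf[(b - m)\chi_I\r]^\ast(|I|/8).$$
By the paper's convention for the non-increasing rearrangement, this in turn forces $|\{x \in I : |b(x) - m| > \alpha\}| \ge |I|/8$ for every $\alpha < \omega$. A routine continuity-of-measure step, letting $\alpha \nearrow \omega$ along a countable sequence, then upgrades this to
$$\lf|\lf\{x \in I : |b(x) - m| \ge \omega\r\}\r| \ge \frac{|I|}{8}.$$
Splitting this set according to the sign of $b - m$, at least one of $I^+ := \{x \in I : b(x) \ge m + \omega\}$ or $I^- := \{x \in I : b(x) \le m - \omega\}$ has measure at least $|I|/16$.

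Without loss of generality (the other case is symmetric, interchanging the roles of $E$ and $F$), assume $|I^+| \ge |I|/16$, and define
$$E := I^+ = \lf\{x \in I : b(x) \ge m + \omega\r\}, \qquad F := \lf\{y \in I : b(y) \le m\r\}.$$
Both $E$ and $F$ lie in $I \subset 5I$, and the median property gives $|F| \ge |I|/2$, so $|G| = |E|\,|F| \ge |I|^2/32$, which is (i). For any $(x,y) \in E \times F$ we have
$$b(x) - b(y) \ge (m + \omega) - m = \omega = \omega_{1/8}(b;I) \ge 0,$$
which simultaneously delivers the pointwise lower bound in (ii) and the constant-sign assertion in (iii).

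The argument is light once the median and the distribution-function interpretation of $\omega_{1/8}$ are in place; the only mild obstacle is respecting the strict-inequality convention in the paper's definition of $f^\ast$, which is what makes the limit $\alpha \nearrow \omega$ necessary. The Cauchy-integral structure plays no role in the lemma itself: the inflated ambient set $5I$ in the statement is more generous than needed here (we in fact place $F$ inside $I$), and is present to match the way the lemma is later consumed in the proof of Theorem \ref{thm1}, where one reads off a lower bound on $[b,C_\Gamma]$ via the kernel bound $|C_\Gamma(x,y)| \sim 1/|x-y|$ on $E \times F \subset I \times 5I$.
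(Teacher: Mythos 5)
Your argument does prove (i)--(iii) as literally stated, and it is correct: taking a median $m$ of $b$ on $I$ itself, your rearrangement step giving $|\{x\in I:\,|b(x)-m|\ge \omega_{\frac{1}{8}}(b;I)\}|\ge \frac{1}{8}|I|$ is the same continuity-from-above argument the paper uses, and the sign split with $E:=\{x\in I:\,b(x)\ge m+\omega\}$ (or its symmetric counterpart) and $F:=\{y\in I:\,b(y)\le m\}$ yields the measure bound, the pointwise bound and the constant sign. The genuine difference from the paper is where the median is taken and where $F$ lives: the paper runs the very same scheme with $\alpha_{\widetilde I}(b)$, the median over the shifted interval $\widetilde I:=I(x_0+4r,r)\subset 5I$, and places $F\subset\widetilde I$, so that its sets enjoy two additional properties not recorded in the statement, namely $x-y<0$ and $|x-y|\sim|I|$ for all $(x,y)\in E\times F$. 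These are exactly what the proof of Theorem \ref{thm1} invokes: it asserts $|x-y|\sim|I|$ on $G$, and the passage from $\iint_G|b(x)-b(y)|\,|x-y|^{-1}\,dx\,dy$ to $\int_E\bigl|\int_F[b(x)-b(y)](x-y)^{-1}\,dy\bigr|\,dx$ and then to the real part of the Cauchy kernel requires $x-y$ to keep one sign on $E\times F$, so that no cancellation can occur in the inner integral. With your choice both $E$ and $F$ sit inside $I$, so $x-y$ changes sign and $|x-y|$ can be arbitrarily small; the lemma you prove is true but too weak to feed the subsequent argument, and your closing remark misreads the role of $5I$: what is needed downstream is not the upper bound $|C_\Gamma(x,y)|\lesssim|x-y|^{-1}$ but a fixed-sign, size-$|I|^{-1}$ lower bound for the real part of the kernel on $E\times F$, which is precisely what the separation $F\subset\widetilde I$ buys. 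A minor point: in the degenerate case $\omega_{\frac{1}{8}}(b;I)=0$, conclusion (iii) is not automatic for an arbitrary choice of $E$ and $F$, though the same median split disposes of it; your dismissal of that case is a bit too quick but harmless.
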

\begin{proof}
We first recall the median value as in \cite{J1983}.
  For any $b\in L_{\mathrm{loc}}^1(\rr)$ and interval $I\subset\rr$, let $\alpha_I(b)$ be a
  real number such that
  $$\inf_{c\in\rr} \frac{1}{|I|} \int_I |b(x)-c|\,dx$$
  is attained. Observe that $\alpha_I(b)$ exists and may not be unique. Moreover,
  $\alpha_I(b)$ satisfies that
  \begin{align}\label{median}
  \lf|\{x\in I:\,\,b(x)>\alpha_I(b)\}\r|\le \frac{1}{2}|I|\quad \mathrm{and}\quad
    |\{x\in I:\,\,b(x)<\alpha_I(b)\}|\le \frac{1}{2}|I|.
  \end{align}

  For $I:=I(x_0,r)$ with $x_0\in\rr$ and $r\in(0,\infty)$, let $\widetilde{I}:=(x_0+4r,r)$.
  Then $\widetilde{I}\subset5I$ and
  $x-y<0$ for any $x\in I$, $y\in\widetilde{I}$. To show Lemma \ref{product}(ii),
  we first prove that there exists $\mathcal{E}\subset I$, $|\mathcal{E}|=\frac{1}{8}|I|$
  such that, for any $x\in\mathcal{E}$,
\begin{align}\label{w1/8}
\omega_{\frac{1}{8}}(b;I)\le\lf|b(x)-\alpha_{\widetilde{I}}(b)\r|.
\end{align}
Noticing
$$\omega_{\frac{1}{8}}(b;I)=\inf_{c\in\rr}\lf[(b-c)\chi_I\r]^*\lf(\frac{1}{8}|I|\r)
\le\lf\{\lf[b-\alpha_{\widetilde{I}}(b)\r]\chi_I\r\}^*\lf(\frac{1}{8}|I|\r)=:t_0,$$
we claim that, for any $t\le t_0$,    
\begin{align}\label{1/8|I|}
\lf|\lf\{x\in I:\,\,\lf|b(x)-\alpha_{\widetilde{I}}(b)\r|\geq t\r\} \r|\geq\frac{1}{8}|I|.
\end{align}
  Indeed, if $t<t_0$, then \eqref{1/8|I|} holds true trivially. For $t=t_0$,
  taking $\{t_j\}_{j\in\nn}\subset(0,t_0)$ and $\{t_j\}_{j\in\nn}\uparrow t_0$, we then have
 \begin{align}
    \lf|\lf\{x\in I:\,\,\lf|b(x)-\alpha_{\widetilde{I}}(b)\r|\geq t_0\r\}\r|
    &=\lf|\bigcap_{j\in\nn}\lf\{x\in I:\,\,\lf|b(x)-\alpha_{\widetilde{I}}(b)\r|\geq t_j\r\}\r|\notag\\
    &=\lim_{j\to\infty}\lf|\lf\{x\in I:\,\,\lf|b(x)-\alpha_{\widetilde{I}}(b)\r|\geq t_j\r\}\r|
    \geq\frac{1}{8}|I|\notag
  \end{align}
  and hence \eqref{1/8|I|} holds true as well. This finishes the proof of the above claim \eqref{1/8|I|}.
  Now, for $t=\omega_{\frac{1}{8}}(b;I)$, take
  $\mathcal{E}\subset\{x\in I:\,\,|b(x)-\alpha_{\widetilde{I}}(b)|\geq t\}$ satisfying $|\mathcal{E}|=\frac{1}{8}|I|$ and hence \eqref{w1/8} holds true.

  Next, we come to show that there exist $E\subset\mathcal{E}$, $F\subset\widetilde{I}$ such that
  $|E|=\frac{1}{16}|I|$, $|F|=\frac{1}{2}|\widetilde{I}|$,
  \begin{align}\label{|bx-median|<|bx-by|}
    |b(x)-\alpha_{\widetilde{I}}(b)|\le|b(x)-b(y)|,\,\,\forall(x,y)\in E\times F
  \end{align}
  and
  \begin{align}\label{bx-byNCS}
    b(x)-b(y)\,\,\mathrm{does\ not\ change\ sign\ in}\,\, E\times F.
  \end{align}
  Indeed, let
  $$E_1:=\lf\{x\in\mathcal{E}:\,\,b(x)\geq\alpha_{\widetilde{I}}(b)\r\}\quad\mathrm{and}\quad E_2:=\lf\{x\in\mathcal{E}:\,\,b(x)\le\alpha_{\widetilde{I}}(b)\r\};$$
  $$F_1:=\lf\{y\in\widetilde{I}:\,\,b(y)\le\alpha_{\widetilde{I}}(b)\r\}\quad\mathrm{and}\quad F_2:=\lf\{y\in\widetilde{I}:\,\,b(y)\geq\alpha_{\widetilde{I}}(b)\r\}.$$
  Then $|F_1|\geq\frac{1}{2}|\widetilde{I}|$, $|F_2|\geq\frac{1}{2}|\widetilde{I}|$ and
  there exists $i\in\{1,2\}$ such that $|E_i|\geq\frac{1}{2}|\mathcal{E}|$.
  Without loss of generality, we may assume $|E_1|\geq\frac{1}{2}|\mathcal{E}|$.
  Then there exist $E\subset E_1$ and $F\subset F_1$ such that
  $|E|=\frac{1}{2}|\mathcal{E}|$ and $|F|=\frac{1}{2}|\widetilde{I}|$.
  Thus,
  $$\lf|b(x)-\alpha_{\widetilde{I}}(b)\r|=b(x)-\alpha_{\widetilde{I}}(b)\le b(x)-b(y),\,\, \forall(x,y)\in E\times F.$$
  Therefore, \eqref{|bx-median|<|bx-by|} and \eqref{bx-byNCS} hold true.

  By \eqref{w1/8} and \eqref{|bx-median|<|bx-by|}, we know that Lemma \ref{product}(ii) holds true.
  Meanwhile, Lemma \ref{product}(iii) follows from \eqref{bx-byNCS}. In addition, $|G|=|E||F|=\frac{1}{32}|I|^2$, namely, Lemma \ref{product}(i) holds true.
  This finishes the proof of Lemma \ref{product}.
\end{proof}

Now, we give the proof of Theorem \ref{thm1}.

\begin{proof}[Proof of Theorem \ref{thm1}]
Assume that (i) holds true first, that is, $b\in \BMO(\rr)$.
Since $C_\Gamma$ is a standard Calder\'on-Zygmund operator and the commutator $[b, C_\Gamma]$
is bounded on $L^p(\rr)$ for any $p\in(1, \infty)$ (see \cite[Lemma 3.3 and Theorem 1.1]{LNWW2017}),
from \cite[Theorems 2.1 and 2.2]{FLY1999} (see also \cite[Theorem 3.3]{KS2009}),
we deduce that, for any $p\in(1,\infty)$ and $\lambda\in(0,1)$, $[b, C_\Gamma]$ is also bounded on $\morrey(\rr)$ and
$$\lf\|[b,C_{\Gamma}]\r\|_{\morrey(\rr)\to\morrey(\rr)}\ls\|b\|_{\BMO(\rr)}.$$
This implies (ii).

Since the implication relation from (ii) to (iii) is obvious, it follows that,
to show Theorem \ref{thm1}, it suffices to show that (iii) implies (i).
To this end, assume that $[b, C_\Gamma]$ is bounded on $\morrey(\rr)$ for some
$p\in(1,\infty)$ and $\lambda\in(0,1)$. To show that $b\in \BMO(\rr)$,
for any interval $I\subset\rr$, let $G$ be as in Lemma \ref{product}(i).
Then $|x-y|\sim|I|$ for all $(x,y)\in G$. By Lemma \ref{product}(ii), we have
  $$\omega_{\frac{1}{8}}(b;I)|G|=\iint_G\omega_{\frac{1}{8}}(b;I)\,dx\,dy\le\iint_G|b(x)-b(y)|\,dx\,dy
  \lesssim|I|\iint_G|b(x)-b(y)|\frac{1}{|x-y|}\,dx\,dy.$$
  By Lemma \ref{product}, the definition of $C_\Gamma$ and the H\"{o}lder inequality, we conclude that
  \begin{align}
    \omega_{\frac{1}{8}}(b;I)&\lesssim\frac{|I|}{|G|}\iint_G|b(x)-b(y)|\frac{1}{|x-y|}\,dx\,dy
    \lesssim\frac{1}{|I|}\int_E\lf|\int_F[b(x)-b(y)]\frac{1}{x-y}\,dy\r|\,dx\notag\\
    &\sim\frac{1}{|I|}\int_E\lf|\int_F[b(x)-b(y)]\frac{x-y}{(x-y)^2+[A(x)-A(y)]^2}\,dy\r|\,dx\notag\\
    &\lesssim\frac{1}{|I|}\int_E\lf|\int_F[b(x)-b(y)]\frac{1}{x-y+i[A(x)-A(y)]}\,dy\r|\,dx\notag\\
    &\sim\frac{1}{|I|}\int_E\lf|[b,C_\Gamma]\chi_F(x)\r|\,dx
    \lesssim\lf[\frac{1}{|I|}\int_E\lf|[b,C_\Gamma]\chi_F(x)\r|^p\,dx\r]^{\frac{1}{p}}
    \lesssim|I|^{-\frac{1-\lambda}{p}}\|[b,C_\Gamma]\chi_F\|_{\morrey(\rr)}\notag\\
    &\lesssim|I|^{-\frac{1-\lambda}{p}}\|\chi_F\|_{\morrey(\rr)}\|[b,C_\Gamma]\|_
    {\morrey(\rr)\to\morrey(\rr)}
    \lesssim|I|^{-\frac{1-\lambda}{p}}\|\chi_{\widetilde{I}}\|_{\morrey(\rr)}\|[b,C_\Gamma]\|_
    {\morrey(\rr)\to\morrey(\rr)}\notag\\
    &\sim|I|^{-\frac{1-\lambda}{p}}\lf|\widetilde{I}\r|^{\frac{1-\lambda}{p}}\|[b,C_\Gamma]\|_
    {\morrey(\rr)\to\morrey(\rr)}
    \sim\|[b,C_\Gamma]\|_{\morrey(\rr)\to\morrey(\rr)}.\notag
  \end{align}
  From Lemma \ref{lmo}, we deduce that $b\in\BMO(\rr)$ and
  $$\|b\|_{\BMO(\rr)}\lesssim\|[b,C_\Gamma]\|_{\morrey(\rr)\to\morrey(\rr)}.$$
Thus, (i) holds true.  This finishes the proof of Theorem \ref{thm1}.
\end{proof}

\subsection{Proof of Theorem \ref{fac}}\label{s2.2}

This subsection is devoted to the proof of Theorem \ref{fac}. We begin with the following lemma
established in \cite{KM2006}.

\begin{lemma}\label{H1}
 If $\int_{\rr}f(x)\,dx=0$ and, for some $N\in(10,\infty)$ and $|x_0-y_0|=N$,
 $$|f(x)|\le\lf[\chi_{I(x_0,\,1)}(x)+ \chi_{I(y_0,\,1)}(x)\r]$$ for any $x\in\rr$,
 then there exists a positive constant $C$, independent of $N$, $x_0$ and $y_0$, such that
 $$\|f\|_{H^1(\rr)}\le C \log N.$$
\end{lemma}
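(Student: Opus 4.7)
The plan is to construct an explicit atomic decomposition of $f$ that uses only $O(\log N)$ atoms, each entering with an $O(1)$ coefficient, and then to read off the bound $\|f\|_{H^1(\rr)}\ls \log N$. To localize, set $f_1:=f\chi_{I(x_0,1)}$ and $f_2:=f\chi_{I(y_0,1)}$ and put $a:=\int_\rr f_1(x)\,dx=-\int_\rr f_2(x)\,dx$, where the cancellation $\int_\rr f\,dx=0$ is used and the pointwise bound on $f$ forces $|a|\le 2$. Then I would split
$$f=\lf(f_1-\tfrac{a}{2}\chi_{I(x_0,1)}\r)+\lf(f_2+\tfrac{a}{2}\chi_{I(y_0,1)}\r)+\tfrac{a}{2}\lf(\chi_{I(x_0,1)}-\chi_{I(y_0,1)}\r).$$
The first two brackets are each supported in a unit interval, have vanishing integral, and have $L^\infty(\rr)$-norm at most $1+|a|/2\le 2$; thus each is a bounded multiple of an atom and contributes $O(1)$ to $\|f\|_{H^1(\rr)}$.

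The heart of the matter is to bound the bridging function $\chi_{I(x_0,1)}-\chi_{I(y_0,1)}$ in $H^1(\rr)$ by a constant multiple of $\log N$. For this I would use a dyadic telescoping argument. Let $k\in\nn$ with $2^k\le N<2^{k+1}$, so $k\sim\log N$, and, for $j=0,1,\ldots,k$, set
$$\psi_j:=|I(x_0,2^j)|^{-1}\chi_{I(x_0,2^j)},\qquad \psi'_j:=|I(y_0,2^j)|^{-1}\chi_{I(y_0,2^j)},$$
so that each $\psi_j$ and each $\psi'_j$ integrates to $1$. Telescoping then yields
$$\tfrac{1}{2}\lf(\chi_{I(x_0,1)}-\chi_{I(y_0,1)}\r)=\sum_{j=0}^{k-1}(\psi_j-\psi_{j+1})+(\psi_k-\psi'_k)-\sum_{j=0}^{k-1}(\psi'_j-\psi'_{j+1}),$$
a sum of $2k+1=O(\log N)$ mean-zero summands. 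For each $j<k$, the difference $\psi_j-\psi_{j+1}$ is supported in $I(x_0,2^{j+1})$ with $L^\infty(\rr)$-norm equal to $2^{-(j+2)}$, which matches the atom normalization $r^{-1}=2^{-(j+1)}$ on that interval up to a factor of $1/2$; the bound for $\psi'_j-\psi'_{j+1}$ is identical.

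The only nonroutine piece, and what I anticipate as the main bookkeeping obstacle, is the middle bridge $\psi_k-\psi'_k$. Since $2^k\le N<2^{k+1}$, one readily checks that $I(x_0,2^k)\cup I(y_0,2^k)$ is contained in a single interval of length $N+2^{k+1}\le 3N$, while $\|\psi_k-\psi'_k\|_{L^\infty(\rr)}\le 2^{-(k+1)}<1/N$; these two facts together show that $\psi_k-\psi'_k$ is a bounded multiple of an atom at scale $N$. Summing the $O(\log N)$ contributions produces $\|\chi_{I(x_0,1)}-\chi_{I(y_0,1)}\|_{H^1(\rr)}\ls\log N$, which combined with the two $O(1)$ contributions from the previous paragraph yields $\|f\|_{H^1(\rr)}\le C\log N$ with a constant $C$ independent of $N$, $x_0$, and $y_0$, as required.
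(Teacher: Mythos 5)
Your argument is correct, and all the arithmetic checks out: the two localized pieces $f_1-\frac{a}{2}\chi_{I(x_0,1)}$ and $f_2+\frac{a}{2}\chi_{I(y_0,1)}$ are mean-zero, supported in unit intervals, with $L^\infty(\rr)$-norm at most $2$, hence bounded multiples of atoms; each telescoping difference $\psi_j-\psi_{j+1}$ is supported in $I(x_0,2^{j+1})$ with sup-norm $2^{-(j+2)}\le (2^{j+1})^{-1}$ and mean zero, so it is an atom; and the bridge $\psi_k-\psi'_k$ sits in an interval of radius at most $3N/2$ with sup-norm at most $2^{-(k+1)}<N^{-1}\le \frac32\cdot\frac{2}{3N}$, hence is a bounded multiple of an atom. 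Summing the $O(\log N)$ unit coefficients (and absorbing the two $O(1)$ terms, using $N>10$) gives $\|f\|_{H^1(\rr)}\le C\log N$ with $C$ independent of $N$, $x_0$, $y_0$. Note that the paper itself does not prove this lemma; it simply quotes it from the reference [KM2006] (Komori--Mizuhara), so there is no internal proof to match. Your explicit dyadic-telescoping atomic decomposition is in the spirit of the standard arguments for such ``logarithmic bridge'' estimates (one can alternatively verify the bound by estimating the grand or Hardy--Littlewood maximal function of $\chi_{I(x_0,1)}-\chi_{I(y_0,1)}$, which decays like $|x-x_0|^{-1}$ between the two intervals and hence has $L^1$-norm $\sim\log N$), and it has the merit of being completely self-contained within the atomic definition of $H^1(\rr)$ used in this paper, making the independence of the constant from $N$, $x_0$ and $y_0$ transparent.
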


\begin{proof}[Proof of Theorem \ref{fac}]
For any $b\in\BMO(\rr)$, by the John-Nirenberg inequality, we know that $b\in L_{\loc}^p(\rr)$ for any
$p\in(1,\infty)$. For any $g,\,h\in L_c^\infty(\rr)$, from
$[h^{\lambda,\,p'}(\rr)]^\ast=\morrey(\rr)$ and Theorem \ref{thm1}, it follows that
 \begin{align*}
 &\lf|\int_{\rr}b(x)\lf[g(x)C_\Gamma^*h(x)-h(x)C_\Gamma g(x)\r]\,dx\r|\\
 &\quad=\lf|\int_{\rr}\lf[h(x)C_\Gamma(gb)(x)-b(x)h(x)C_\Gamma g(x)\r]\,dx\r|\\
 &\quad=\lf|\int_{I}h(x)[b,C_\Gamma]g(x)\,dx\r|
 \lesssim\|h\|_{h^{\lambda,\,p'}(\rr)}\|[b,C_\Gamma]g\|_{\morrey(\rr)}\\
 &\quad\lesssim\|b\|_{\BMO(\rr)}\|h\|_{h^{\lambda,\,p'}(\rr)}\|g\|_{\morrey(\rr)}.
 \end{align*}
By the duality theorem between $\CMO(\rr)$ and $H^1(\rr)$ (\cite[Theorem 4.1]{CW1977}), we obtain
$$gC_\Gamma^*h-hC_\Gamma g\in H^1(\rr)\quad {\rm and}\quad\lf\|gC_\Gamma^*h-hC_\Gamma g\r\|_{H^1(\rr)}\lesssim\|h\|_{h^{\lambda,\,p'}(\rr)}\|g\|_{\morrey(\rr)}.$$
Now, suppose
$$f=\sum_{l=1}^\infty\sum_{k=1}^\infty \lambda_k^l\lf(g_k^l C_\Gamma^\ast h_k^l-h_k^l C_\Gamma g_k^l\r)$$
 as in Theorem \ref{fac}.
Then, by the arguments as above, we have $f\in H^1(\rr)$ and
$$\|f\|_{H^1(\rr)}\lesssim\sum_{l=1}^\infty\sum_{k=1}^\infty\lf|\lambda_k^l\r|\lf\|g_k^l\r\|_{\morrey(\rr)}
 \lf\|h_k^l\r\|_{h^{\lambda,\,p'}(\rr)}.$$
Thus,
$$\|f\|_{H^1(\rr)}\lesssim\inf\lf\{\sum_{l=1}^\infty\sum_{k=1}^\infty\lf|\lambda_k^l\r|
  \lf\|g_k^l\r\|_{\morrey(\rr)}\lf\|h_k^l\r\|_{h^{\lambda,\,p'}(\rr)}
  :\,\,f=\sum_{l=1}^\infty\sum_{k=1}^\infty \lambda_k^l\lf(g_k^l C_\Gamma^\ast h_k^l-h_k^l C_\Gamma g_k^l\r)\r\}.$$

To prove the converse, let $a$ be an $H^1(\rr)$-atom, supported in $I(x_0,r)$
with $x_0\in\rr$ and $r\in(0,\infty)$, satisfying that
$$\|a\|_{L^\infty(\rr)}\le r^{-1} \quad \mathrm{and}\quad \int_{\rr}a(x)\,dx=0.$$
Choose an integer $N$ sufficiently large which we shall determine later,
and select $y_0\in \rr$ such that $|x_0-y_0|=Nr$.
Now, for any $x\in\rr$, let
$$g(x):=\chi_{I(y_0,\,r)}(x)\quad \mathrm{and}\quad h(x):=\frac{-a(x)}{C_\Gamma g(x_0)}.$$
Since $C_\Gamma$ satisfies the $m$-$n$-homogeneous condition with $m=n=1$ (\cite[Lemma 3.4]{LNWW2017}),
it follows that
\begin{equation}\label{1nh}
  |C_\Gamma g(x_0)|\geq C_1 N^{-1}
\end{equation}
for some positive constant $C_1$.
By some routine calculations, we obtain
$$\|g\|_{\morrey(\rr)}\le |I(y_0,r)|^{-(\lambda-1)/p}\quad \mathrm{and}\quad
\|h\|_{h^{\lambda,p'}(\rr)}\le C_1 N \lf|I(x_0,r)\r|^{(\lambda-1)/p}.$$
Thus,
$$\|g\|_{\morrey(\rr)} \|h\|_{h^{\lambda,p'}(\rr)}\le C_1 N.$$
Moreover, it is easy to show that
$$\int_{\rr}\lf[g(x)C_\Gamma^*h(x)-h(x)C_\Gamma g(x)\r]dx=0.$$
Consequently, we have
\begin{equation}\label{cancel}
  \int_{\rr}\lf\{a(x)-\lf[g(x)C_\Gamma^*h(x)-h(x)C_\Gamma g(x)\r]\r\}\,dx=0.
\end{equation}
To apply Lemma \ref{H1}, we claim that the following two inequalities hold true:
$$|C_\Gamma g(x_0)-C_\Gamma g(x)|\lesssim N^{-2},\,\,\forall x\in I(x_0,r)$$
and
$$|C_\Gamma^*h(x)|\lesssim N^{-1}r^{-1},\,\,\forall x\in I(y_0,r).$$
Indeed, since $C_\Gamma$ is a standard Calder\'on-Zygmund kernel, we deduce that, for any $x\in I(x_0,r)$,
 $$|C_\Gamma g(x_0)-C_\Gamma g(x)|
 =\lf| \int_{I(y_0,r)}[C_\Gamma(x_0,y)-C_\Gamma(x,y)]\,dy \r|
 \lesssim  \int_{I(y_0,r)}\frac{|x_0-x|}{|x-y|^2} \,dy
 \lesssim  N^{-2}$$
 and, by \eqref{1nh} as well as the cancellation moment condition of atoms, we obtain,
 for any $x\in I(y_0,r)$,
 \begin{align*}
   \lf|C_\Gamma^*h(x)\r|=&\lf| \frac{1}{C_\Gamma g(x_0)} \int_{\rr}C_\Gamma(y,x)[-a(y)]\,dy \r|
   \lesssim  N  \lf| \int_{\rr}C_\Gamma(y,x)a(y)\,dy \r| \\
   \lesssim& N \int_{I(x_0,\,r)}\lf|C_\Gamma(x_0,x)-C_\Gamma(y,x)\r|\|a\|_{L^\infty(\rr)}\,dy
   \lesssim N r \frac{|x-y|}{|x_0-x|^2} r^{-1}
   \lesssim N^{-1}r^{-1}.
 \end{align*}
From the above estimates, we deduce that, for any $x\in\rr$,
\begin{align}\label{kk}
 &\lf|a(x)-\lf[g(x)C_\Gamma^*h(x)-h(x)C_\Gamma g(x)\r]\r|\\
 &\quad=\lf| \frac{a(x)[C_\Gamma g(x_0)-C_\Gamma g(x)]}{C_\Gamma g(x_0)}-g(x)C_\Gamma^*h(x)\r| \notag \\
 &\quad\lesssim \lf\{\frac{|a(x)|}{|C_\Gamma g(x_0)|}\lf|C_\Gamma g(x_0)-C_\Gamma g(x)\r|+|g(x)|\lf|C_\Gamma^*h(x)\r|\r\} \notag \\
 &\quad\lesssim N^{-1}r^{-1} \lf[\chi_{I(x_0,\,r)}(x)+ \chi_{I(y_0,\,r)}(x)\r].\notag
\end{align}
By \eqref{cancel}, \eqref{kk} and Lemma \ref{H1}, we know that
$$\lf\|a-\lf(gC_\Gamma^*h-hC_\Gamma g\r)\r\|_{H^1(\rr)}\le C_2N^{-1} \log N$$
for some positive constant $C_2$ depending on $C_1$.

Next, for any $f\in H^1(\rr)$, we can write $f=\sum_{k=1}^\infty \lambda_k^1 a_k^1$ in ${H^1(\rr)}$
norm via the atomic decomposition, where $\{a_k^1\}_{k\in\nn}$ are ${H^1(\rr)}$-atoms and
$$\sum_{k=1}^\infty \lf|\lambda_k^1\r|\le C_3\lf\|f\r\|_{H^1(\rr)}$$
for some constant $C_3\in(1,\infty)$ independent of $f$.
Then there exist $\{g_k^1\}_{k\in\nn} \subset \morrey(\rr)$ and $\{h_k^1\}_{k\in\nn} \subset h^{\lambda,p'}(\rr)$ such that
$$\lf\|g_k^1\r\|_{\morrey(\rr)}\lf\|h_k^1\r\|_{h^{\lambda,p'}(\rr)}\le C_1N$$
and
$$\lf\|a_k^1-\lf(g_k^1C_\Gamma^*h_k^1-h_k^1C_\Gamma g_k^1\r)\r\|_{H^1(\rr)}\le C_2 N^{-1} \log N.$$
Now we write
$$f=\sum_{k=1}^\infty \lambda_k^1 a_k^1
=\sum_{k=1}^\infty \lambda_k^1\lf(g_k^1C_\Gamma^*h_k^1-h_k^1C_\Gamma g_k^1\r)
+\sum_{k=1}^\infty \lambda_k^1 \lf[a_k^1-\lf(g_k^1C_\Gamma^*h_k^1-h_k^1C_\Gamma g_k^1\r)\r]
=:\mathrm{M}_1+\mathrm{E}_1.$$
Choosing $N\in(10,\infty)$ sufficiently large such that $C_2 N^{-1} \log N\in(0,\frac{1}{2})$,
we then have
$$\sum_{k=1}^\infty\lf\|\lambda_k^1g_k^1\r\|_{\morrey(\rr)}\lf\|h_k^1\r\|_{h^{\lambda,p'}(\rr)}
\le C_1N\sum_{k=1}^\infty \lf|\lambda_k^1\r|\le C_{(N)}\lf\|f\r\|_{H^1(\rr)}$$
and
$$\lf\|\mathrm{E}_1\r\|_{H^1(\rr)}\le C_2 N^{-1} \log N\lf\|f\r\|_{H^1(\rr)} \le\frac{1}{2}\lf\|f\r\|_{H^1(\rr)},$$
where $C_{(N)}$ is a positive constant depending on $C_1$, $C_3$ and $N$, but being independent of $f$.

Since $\mathrm{E}_1\in H^1(\rr)$, for the above given $C_3$, there exist a sequence of atoms $\{a_k^2\}_{k\in\nn}$ and numbers $\{\lambda_k^2\}_{k\in\nn}$ such that
$\mathrm{E}_1=\sum_{k=1}^\infty \lambda_k^2 a_k^2$ and
$$\sum_{k=1}^\infty \lf|\lambda_k^2\r|\le C_3 \lf\|\mathrm{E}_1\r\|_{H^1(\rr)}.$$
Notice that the positive constant $C_1$ in \eqref{1nh} is determined uniformly by
the 1-1-homogeneous condition (\cite{LW2017}).
Thus, there exist $\{g_k^2\}_{k\in\nn} \subset \morrey(\rr)$ and $\{h_k^2\}_{k\in\nn} \subset h^{\lambda,\,p'}(\rr)$ such that
$$\lf\|g_k^2\r\|_{\morrey(\rr)}\lf\|h_k^2\r\|_{h^{\lambda,\,p'}(\rr)}\le C_1N$$
and
$$\lf\|a_k^2-\lf(g_k^2C_\Gamma^*h_k^2-h_k^2C_\Gamma g_k^2\r)\r\|_{H^1(\rr)}\le C_2 N^{-1} \log N$$
for the same positive constants $C_1$ and $C_2$ as above.
Next we write
$$\mathrm{E}_1=\sum_{k=1}^\infty \lambda_k^2 a_k^2
=\sum_{k=1}^\infty \lambda_k^2\lf(g_k^2C_\Gamma^*h_k^2-h_k^2C_\Gamma g_k^2\r)+
\sum_{k=1}^\infty \lambda_k^2 \lf[a_k^2-\lf(g_k^2C_\Gamma^*h_k^2-h_k^2C_\Gamma g_k^2\r)\r]
=:\mathrm{M}_2+\mathrm{E}_2.$$
Therefore,
$$f=\mathrm{M}_1+\mathrm{E}_1=\mathrm{M}_1+\mathrm{M}_2+\mathrm{E}_2=
\sum_{l=1}^2\sum_{k=1}^\infty\lambda_k^l\lf(g_k^lC_\Gamma^*h_k^l-h_k^lC_\Gamma g_k^l\r)+\mathrm{E}_2.$$
With the same choice of $N$ as above, we have
$$\sum_{k=1}^\infty\lf\|\lambda_k^2g_k^2\r\|_{\morrey(\rr)}\lf\|h_k^2\r\|_{h^{\lambda,\,p'}(\rr)}
\le C_1N\sum_{k=1}^\infty \lf|\lambda_k^2\r|\le C_{(N)}\lf\|\mathrm{E}_1\r\|_{H^1(\rr)}\le\frac{1}{2}C_{(N)}\lf\|f\r\|_{H^1(\rr)}$$
and
$$\lf\|\mathrm{E}_2\r\|_{H^1(\rr)}\le C_2 N^{-1} \log N\lf\|\mathrm{E}_1\r\|_{H^1(\rr)} \le\frac{1}{2}\lf\|\mathrm{E}_1\r\|_{H^1(\rr)}\le\frac{1}{2^2}\lf\|f\r\|_{H^1(\rr)}.
$$

Continuing in this way, we conclude that, for any $L\in\nn$, $f$ has the representation
$$f=\sum_{l=1}^L\sum_{k=1}^\infty\lambda_k^l\lf(g_k^lC_\Gamma^*h_k^l-h_k^lC_\Gamma g_k^l\r)
+\mathrm{E}_L$$
satisfying
$$\sum_{k=1}^\infty\lf\|\lambda_k^Lg_k^L\r\|_{\morrey(\rr)}\lf\|h_k^L\r\|_{h^{\lambda,\,p'}(\rr)}
\le C_1N\sum_{k=1}^\infty \lf|\lambda_k^L\r|\le \frac{1}{2^{L-1}}C_{(N)}\lf\|f\r\|_{H^1(\rr)}$$
and
$$\lf\|\mathrm{E}_L\r\|_{H^1(\rr)}\le C_2 N^{-1} \log N\lf\|\mathrm{E}_{L-1}\r\|_{H^1(\rr)} \le\frac{1}{2^{L}}\lf\|f\r\|_{H^1(\rr)}.
$$
Letting $L\to\infty$, we find that
$$\sum_{l=1}^\infty\sum_{k=1}^\infty\lf\|\lambda_k^lg_k^l\r\|_{\morrey(\rr)}
\lf\|h_k^l\r\|_{h^{\lambda,\,p'}(\rr)}\le 2C_{(N)}\lf\|f\r\|_{H^1(\rr)}$$
and
$$  f=\sum_{l=1}^\infty\sum_{k=1}^\infty \lf(g_k^lC_\Gamma^\ast h_k^l-h_k^lC_\Gamma g_k^l\r)\,\, \mathrm{in}\,\, H^1(\rr).
$$
This finishes the proof of Theorem \ref{fac}.
\end{proof}

\section{Proof of Theorem \ref{thm2}}\label{s3}

In this section, we present the proof of Theorem \ref{thm2} by splitting it into two subsections.
In Section \ref{s3.1}, we give the proof of the implication relation from (i) to (ii),
that is, we show that, if $b\in {\rm CMO}(\mathbb R)$, then the commutator $[b, C_\Gamma]$
is compact on $L^{p,\,\lambda}(\mathbb R)$ for any $p \in (1,\infty)$ and $\lambda\in(0,1)$.
In Section \ref{s3.2}, we give the proof of the implication relation from (iii) to (i),
that is, if $[b, C_\Gamma]$ is compact on $L^{p,\,\lambda}(\mathbb R)$ for some
$p \in (1,\infty)$ and $\lambda\in(0,1)$, then $b\in {\rm CMO}(\mathbb R)$.

\subsection{Proof of the implication relation from (i) to (ii) of Theorem \ref{thm2}}\label{s3.1}

To prove the implication relation from (i) to (ii) of Theorem \ref{thm2}, we begin with
the following two technical lemmas.
The first one is a variant of the Fr\'{e}chet-Kolmogorov theorem suitable for $\morrey(\rr)$
by \cite[Theorem 1.12]{CDW2012}; see also \cite[p.\,275, Theorem (Fr\'{e}chet-Kolmogorov)]{Yosida}.
\begin{lemma}\label{cpt}
Let $p \in [1,\infty)$ and $\lambda \in (0,1)$. Suppose the subset
$E\subset\morrey(\rr)$ satisfies the following conditions:
\begin{enumerate}
\item[\rm(i)] $E$ is uniformly bounded, i.\,e.,
$$\sup_{f\in E} \|f\|_{\morrey(\rr)}<\infty;$$
\item[\rm(ii)] $E$ is uniformly equicontinuous, i.\,e.,
$$\lim_{y\to0}\|f(\cdot +y)-f(\cdot)\|_{\morrey(\rr)}=0\quad
\text{uniformly\,\,for\,\,any}\,\, f\in E;$$
\item[\rm(iii)] $E$ is uniformly vanishes at infinity, i.\,e.,
$$\lim_{\alpha\to\infty}\lf\|f\chi_{\{x\in\rr:\ |x|\ge\alpha\}}\r\|_{\morrey(\rr)}=0\quad
\text{uniformly\,\,for\,\,any}\,\, f\in E.$$
\end{enumerate}
Then $E$ is relatively compact in $\morrey(\rr)$.
\end{lemma}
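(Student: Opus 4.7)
The plan is to adapt the classical Fr\'echet--Kolmogorov argument (as in Yosida) to the Morrey setting via mollification and truncation, and then to apply the Arzel\`a--Ascoli theorem. Since $\morrey(\rr)$ is a Banach space, it suffices to show that $E$ is totally bounded, i.e., that for every $\varepsilon>0$ there is a finite $\varepsilon$-net. Fix a nonnegative $\phi\in C_c^\infty(\rr)$ with $\supp\phi\subset[-1,1]$ and $\int\phi=1$, set $\phi_\delta(x):=\delta^{-1}\phi(x/\delta)$, and consider the approximation $T_{\delta,\alpha}f:=(f\chi_{I(0,\alpha)})*\phi_\delta$, which is continuous and supported in $I(0,\alpha+\delta)$.

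The first step is to choose $\delta$ small and $\alpha$ large so that $T_{\delta,\alpha}f$ approximates $f$ uniformly in Morrey norm. For this I would first verify the convolution--Minkowski inequality
$$\|g*\psi\|_{\morrey(\rr)}\le\|g\|_{\morrey(\rr)}\|\psi\|_{L^1(\rr)},$$
which follows by applying Minkowski's integral inequality inside each $L^p(I(x,r))$ and then taking the supremum over $x\in\rr$ and $r\in(0,\infty)$. Writing $f-f*\phi_\delta=\int\phi_\delta(y)[f-f(\cdot-y)]\,dy$, the same reasoning gives
$$\|f-f*\phi_\delta\|_{\morrey(\rr)}\le\sup_{|y|\le\delta}\|f-f(\cdot-y)\|_{\morrey(\rr)},$$
which by (ii) is less than $\varepsilon/3$ uniformly in $f\in E$ for sufficiently small $\delta$. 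Then by (iii) choose $\alpha$ large so that $\|f\chi_{\{|x|\ge\alpha\}}\|_{\morrey(\rr)}<\varepsilon/3$ uniformly in $f\in E$. Combining these and using the convolution--Minkowski bound once more,
$$\|f-T_{\delta,\alpha}f\|_{\morrey(\rr)}\le\|f-f*\phi_\delta\|_{\morrey(\rr)}+\|f\chi_{\{|x|\ge\alpha\}}\|_{\morrey(\rr)}<\frac{2\varepsilon}{3}$$
uniformly in $f\in E$.

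For this fixed pair $(\delta,\alpha)$, I would next apply Arzel\`a--Ascoli to the family $T_{\delta,\alpha}(E)$. A H\"older estimate combined with the Morrey bound on $f$ yields a uniform $L^\infty(\rr)$-bound $\|T_{\delta,\alpha}f\|_{L^\infty(\rr)}\lesssim\|\phi\|_{L^\infty(\rr)}\delta^{(\lambda-1)/p}\|f\|_{\morrey(\rr)}$; a mean-value estimate via $\phi_\delta'$ gives a uniform Lipschitz bound; and all $T_{\delta,\alpha}f$ share the common compact support $I(0,\alpha+\delta)$. Arzel\`a--Ascoli then produces a finite sup-norm $\eta$-net of $T_{\delta,\alpha}(E)$ for any $\eta>0$. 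To convert this into a Morrey-norm net, observe that for any $g$ with $\supp g\subset I(0,R)$, splitting the radii according to whether $r\le R$ or $r>R$ gives
$$\|g\|_{\morrey(\rr)}\le C_R\|g\|_{L^\infty(\rr)},$$
where the assumption $\lambda<1$ is used to keep the $r\to0$ contribution finite. Choosing $\eta$ so that $C_{\alpha+\delta}\eta<\varepsilon/3$, the triangle inequality together with the first step yields a finite $\varepsilon$-net of $E$ in $\morrey(\rr)$, proving total boundedness.

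The main obstacle is not deep; it is to verify that the two analytic ingredients underlying the classical $L^p$ argument---the convolution--Minkowski inequality and the domination of the Morrey norm by the sup norm on compactly supported functions---remain valid with the right uniformity for $\morrey(\rr)$. Both follow readily from the definition of $\|\cdot\|_{\morrey(\rr)}$ as a supremum of localised $L^p$ norms together with $\lambda\in(0,1)$; once they are in place the proof mirrors the classical case verbatim.
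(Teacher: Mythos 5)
Your argument is correct. Note first that the paper does not actually prove this lemma: it is quoted from \cite[Theorem 1.12]{CDW2012} (with a pointer to Yosida for the classical case), so there is no in-paper proof to match; what you have written is a self-contained substitute in the classical Fr\'echet--Kolmogorov spirit, and it holds up. The two Morrey-specific ingredients you isolate are exactly the right ones and both check out: translation invariance of $\|\cdot\|_{\morrey(\rr)}$ plus Minkowski's integral inequality inside each $L^p(I(x,r))$ gives $\|g*\psi\|_{\morrey(\rr)}\le\|g\|_{\morrey(\rr)}\|\psi\|_{L^1(\rr)}$ and hence $\|f-f*\phi_\delta\|_{\morrey(\rr)}\le\sup_{|y|\le\delta}\|f-f(\cdot-y)\|_{\morrey(\rr)}$, while for $g$ supported in $I(0,R)$ one has $\|g\|_{\morrey(\rr)}\le (2R^{1-\lambda})^{1/p}\|g\|_{L^\infty(\rr)}$, where $1-\lambda>0$ controls both the $r\le R$ and $r>R$ ranges of radii. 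Your $L^\infty$ and Lipschitz bounds on $T_{\delta,\alpha}f$ (with constants depending on $\delta,\alpha$ but uniform over $E$ by (i)) legitimize Arzel\`a--Ascoli on $C(\overline{I(0,\alpha+\delta)})$, and the decomposition $f-T_{\delta,\alpha}f=(f-f*\phi_\delta)+(f\chi_{\{|x|\ge\alpha\}})*\phi_\delta$ uses hypotheses (ii) and (iii) exactly where they are needed; this is the key point, since $C_c^\infty(\rr)$ is not dense in $\morrey(\rr)$ and translation is not continuous on all of $\morrey(\rr)$, but here both smallness statements are assumptions on $E$, so the classical scheme goes through. The only cosmetic remark is in the last step: taking the Arzel\`a--Ascoli centers themselves (the functions $T_{\delta,\alpha}f_j$) as centers of the net gives an $\varepsilon$-net directly, whereas insisting the centers lie in $E$ gives radius $5\varepsilon/3$; either way total boundedness, hence relative compactness in the Banach space $\morrey(\rr)$, follows.
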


We also need the boundedness of the maximal operator $C_{\Gamma,\,*}$ of the truncated
Cauchy integral on $\morrey(\rr)$; see also \cite{FLY1999, KS2009}.
\begin{lemma}\label{T*}
  Let $p\in(1,\infty)$ and $\lambda\in(0,1)$. Then $C_{\Gamma,\,*}$ is bounded on $\morrey(\rr)$,
  where
  $C_{\Gamma,\,*}$ is defined by setting, for any $f\in\morrey(\rr)$ and $x\in\rr$,
  $$C_{\Gamma,\,*}f(x):=\sup_{t>0}\lf| \int_{|x-y|>t}C_\Gamma(x,y)f(y)\,dy \r|.$$
\end{lemma}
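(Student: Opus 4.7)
The plan is to reduce the Morrey-boundedness of $C_{\Gamma,*}$ to the Morrey-boundedness of both the Hardy-Littlewood maximal operator $M$ and of $C_\Gamma$ itself, by means of a pointwise Cotlar-type inequality. Since $C_\Gamma$ is a standard Calder\'on-Zygmund operator---bounded on $L^2(\rr)$ and with a kernel satisfying the size and smoothness estimates (i), (ii) listed right after Definition \ref{Cauchy}---I would first establish the pointwise Cotlar inequality
$$C_{\Gamma,*}f(x)\ls M(C_\Gamma f)(x)+Mf(x),\quad \forall\, x\in\rr,$$
valid for any $f\in\morrey(\rr)$. This is a classical estimate for standard Calder\'on-Zygmund operators (see, e.g., the textbooks of Duoandikoetxea or Grafakos); its proof is a truncation argument that splits $\int_{|x-y|>t}C_\Gamma(x,y)f(y)\,dy$ into pieces corresponding to the interval $I(x,t)$ and its complement, using only the kernel estimates (i)--(ii) and the $L^2(\rr)$-boundedness of $C_\Gamma$. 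The required local integrability of $C_\Gamma f$ is ensured by $f\in\morrey(\rr)\subset L^1_{\loc}(\rr)$ together with the Morrey-boundedness of $C_\Gamma$ recalled below.

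Next I would invoke two known Morrey-boundedness facts: (a) the Hardy-Littlewood maximal operator $M$ is bounded on $L^{q,\lambda}(\rr)$ for every $q\in(1,\infty)$ and $\lambda\in(0,1)$ (the classical Chiarenza-Frasca theorem); (b) $C_\Gamma$ itself is bounded on $\morrey(\rr)$ for every $p\in(1,\infty)$ and $\lambda\in(0,1)$, which follows from its $L^p(\rr)$-boundedness (a direct consequence of Calder\'on-Zygmund theory) and the transfer theorem \cite[Theorems 2.1 and 2.2]{FLY1999} already used in the proof of Theorem \ref{thm1}. Taking the $\morrey(\rr)$-norm of both sides of the Cotlar inequality and applying (a) and (b) in turn, I would obtain
$$\|C_{\Gamma,*}f\|_{\morrey(\rr)}\ls \|M(C_\Gamma f)\|_{\morrey(\rr)}+\|Mf\|_{\morrey(\rr)}\ls \|C_\Gamma f\|_{\morrey(\rr)}+\|f\|_{\morrey(\rr)}\ls \|f\|_{\morrey(\rr)},$$
which is the desired bound.

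I do not anticipate any genuine obstacle; the work is essentially bookkeeping. The only mild point to check is that the Cotlar inequality is often stated for convolution-type Calder\'on-Zygmund operators, whereas $C_\Gamma$ has a non-translation-invariant kernel. However, the non-convolution version is entirely standard: its proof depends only on the kernel estimates (i), (ii) and the $L^2(\rr)$-boundedness, neither of which requires translation invariance, and the special Lipschitz-curve structure of $\Gamma$ plays no role beyond guaranteeing that these two hypotheses hold.
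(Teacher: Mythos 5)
Your proof is correct, but it takes a genuinely different route from the paper. The paper proves Lemma \ref{T*} directly on each interval $I$: it splits $f=f\chi_{2I}+f\chi_{\rr\setminus 2I}$, handles the local piece by the $L^p(\rr)$-boundedness of the maximal truncated operator $C_{\Gamma,\,*}$ (quoted from \cite[p.\,102, Theorem 5.14]{Duo}), and estimates the tail piece by a pointwise bound $|C_{\Gamma,\,*}(f\chi_{\rr\setminus 2I})(x)|\lesssim\int_{|x_0-y|>2r}|f(y)|/|x_0-y|\,dy$ followed by a dyadic-annuli and H\"older argument; this is exactly the standard ``local/global'' transfer of an $L^p$ bound to $\morrey(\rr)$, carried out by hand. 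You instead dominate $C_{\Gamma,\,*}f$ pointwise by $M(C_\Gamma f)+Mf$ via Cotlar's inequality and then quote the Morrey boundedness of $\mathcal{M}$ (\cite{CF1987}) and of $C_\Gamma$ (via the transfer theorems of \cite{FLY1999}, as in the proof of Theorem \ref{thm1}). Your observation that Cotlar's inequality only needs the kernel estimates (i)--(ii), the $L^2(\rr)$ (hence weak $(1,1)$) bound, and no convolution structure is accurate; the only point worth making explicit is that for $f\in\morrey(\rr)\setminus L^p(\rr)$ one must know that $C_\Gamma f$ is well defined a.e.\ and locally integrable (so that $M(C_\Gamma f)$ makes sense), which is supplied by the same Morrey boundedness of $C_\Gamma$ you invoke, since the truncated tail integrals converge absolutely for Morrey functions. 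What each approach buys: yours is shorter and outsources all the work to known theorems (Cotlar, Chiarenza--Frasca, Fan--Lu--Yang), while the paper's argument is more self-contained, needing only the $L^p$ bound for $C_{\Gamma,\,*}$ plus elementary estimates, and its tail computation is reused almost verbatim later in the compactness proof.
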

\begin{proof}
  It suffices to prove that there exists a positive constant $C$ such that,
  for any interval $I\subset\rr$,
  $$\lf[\frac{1}{|I|^\lambda}\int_I\lf|C_{\Gamma,\,*}f(x)\r|^p\,dx\r]^{1/p}
  \le C\|f\|_{\morrey(\rr)}.$$
  Fix an interval $I:=I(x_0,r)$ with $x_0\in\rr$ and $r\in(0,\infty)$,
  and write $f=f_1+f_2$ where $f_1:=f\chi_{2I}$ and $f_2:=f-f_1$. Then we have
  \begin{align}\label{Cgf}
    \lf[\frac{1}{|I|^\lambda}\int_I\lf|C_{\Gamma,\,*}f(x)\r|^p\,dx\r]^{1/p}
    &\le\lf[\frac{1}{|I|^\lambda}\int_I\lf|C_{\Gamma,\,*}f_1(x)\r|^p\,dx\r]^{1/p}+
    \lf[\frac{1}{|I|^\lambda}\int_I\lf|C_{\Gamma,\,*}f_2(x)\r|^p\,dx \r]^{1/p}\\
    &=:\mathrm{A}+\mathrm{B}.\notag
  \end{align}
  By \cite[p.\,102, Theorem 5.14]{Duo},
  we know that $C_{\Gamma,\,*}$ is bounded on $L^p(\rr)$ for any $p\in(1,\infty)$. Thus, we have
  $$\mathrm{A}\lesssim\lf[\frac{1}{|I|^\lambda}\int_{2I}|f(x)|^p\,dx\r]^{1/p}
  \lesssim\|f\|_{\morrey(\rr)}.$$
  On the other hand, notice that $|x_0-y|\lesssim|x-y|$ for any $x\in I$ and $y\in (\rr\backslash2I)$.
  Thus, for any $x\in I$, we have
  $$\lf|C_{\Gamma,\,*}f_2(x)\r|\lesssim\int_{\rr\setminus2I}\frac{|f(y)|}{|x_0-y|}\,dy
    \lesssim\int_{|x_0-y|>2r}\frac{|f(y)|}{|x_0-y|}\,dy.$$
  Moreover, combining this with the H\"{o}lder inequality, we conclude that
  \begin{align*}
     \mathrm{B}\lesssim&|I|^{(1-\lambda)/p}\int_{|x_0-y|>2r}\frac{|f(y)|}{|x_0-y|}\,dy
     \lesssim|I|^{(1-\lambda)/p}\sum_{j=1}^\infty\int_{2^jr<|x_0-y|\le2^{j+1}r}
     \frac{|f(y)|}{|x_0-y|}\,dy\\
     \lesssim&|I|^{(1-\lambda)/p}\sum_{j=1}^\infty\frac{1}{|2^jI|}\int_{2^{j+1}I}|f(y)|\,dy\\
     \lesssim&|I|^{(1-\lambda)/p}\sum_{j=1}^\infty\frac{1}{|2^jI|}
                             \lf[\int_{2^{j+1}I}|f(y)|^pdy\r]^{1/p}\lf|2^{j+1}I\r|^{1/p'}\\
     \lesssim&\|f\|_{\morrey(\rr)}\sum_{j=1}^\infty
                     \frac{|I|^{(1-\lambda)/p}}{|2^{j+1}I|^{(1-\lambda)/p}}
     \lesssim\|f\|_{\morrey(\rr)}.
  \end{align*}
   Applying the estimates of A and B to \eqref{Cgf}, we then complete
   the proof of Lemma \ref{T*}.
\end{proof}

Now, we come to prove the implication relation from (i) to (ii) of Theorem \ref{thm2}. To this end,
we first recall that, for any $f\in L^1_{\mathrm{loc}}(\rr)$,
its Hardy-Littlewood maximal function $\mathcal{M}f$ is defined by setting, for any $x\in\rr$,
$$\mathcal{M}f(x):=\sup_{\substack{I\subset\rr\\I\ni x}}\frac{1}{|I|}\int_I|f(y)|\,dy,$$
where the supremum is taken over all intervals $I$ of $\rr$ which contain $x$.

\begin{proof}[Proof of Theorem \ref{thm2}]
  $(\mathrm{i})\Longrightarrow (\mathrm{ii})$.

  Assume $b\in \CMO(\rr)$.  It follows that, for any $\epsilon\in(0,\infty)$, there exists
  $b_\epsilon\in C_c^\infty(\rr)$ such that
  $\|b-b_\epsilon\|_{\BMO(\rr)}<\epsilon.$
  By Theorem \ref{thm1}, we know that, for any $p\in(1, \infty)$ and $\lambda\in(0, 1)$,
  \begin{align*}
    \|[b,C_\Gamma]f-[b_\epsilon,C_\Gamma]f\|_{\morrey(\rr)}
    \lesssim&\|b-b_\epsilon\|_{\BMO(\rr)}\|f\|_{\morrey(\rr)}
    \lesssim\epsilon\|f\|_{\morrey(\rr)}.
  \end{align*}
  Therefore, it suffices to show that $[b,C_\Gamma]$ is a compact operator when
  $b\in C_c^\infty(\rr)$ (see \cite[p.\,278]{Yosida}).
  Equivalently, it suffices to show that $[b,C_\Gamma]E$ is relatively compact when
  $b\in C_c^\infty(\rr)$ and $E\subset\morrey(\rr)$ is bounded, i.\,e., to
  show that $[b,C_\Gamma]E$ satisfies conditions (i) through (iii) of Lemma \ref{cpt}.

  We first point out that, by Theorem \ref{thm1} and the fact that $b\in \BMO(\rr)$,
  $[b,C_\Gamma]$ is bounded on $\morrey(\rr)$, which implies that $[b,C_\Gamma]E$
  satisfies (i) of Lemma \ref{cpt}.

  Next, since $b\in C_c^\infty(\rr)$, it follows that there exists a positive constant
  $R$ such that $\supp (b)\subset I(0,R)$.
  Let $\alpha\in(2R,\infty)$ and $E_\alpha:=\{x\in\rr:\,\,|x|\le \alpha \}$. Then, 
  for any $y\in I(0,R)$ and $x\in E_\alpha^\complement:=\rn\setminus E_\alpha=\{x\in\rn:\ |x|>\alpha\}$,
  we have $|x-y|\sim |x|$. Thus, by the H\"{o}lder inequality, we find that
  \begin{align*}
    |[b,C_\Gamma]f(x)|&\le\int_{\rr}\lf|C_\Gamma(x,y)\r||b(x)-b(y)|\lf|f(y)\r|\,dy\\
    &\lesssim\|b'\|_{L^\infty(\rr)}\int_{I(0,\,R)}\frac{|f(y)|}{|x-y|}\,dy
    \lesssim\|b'\|_{L^\infty(\rr)}\frac{1}{|x|}R^{\frac{1}{p'}
    +\frac{\lambda}{p}}\|f\|_{\morrey(\rr)}.
  \end{align*}
  Therefore, for any fixed interval $I:=I(x_0,r)$ with $x_0\in\rr$ and $r\in(0,\infty)$,
  similarly to the proof of Lemma \ref{T*}, we have
  \begin{align*}
    \frac{1}{|I|^\lambda}\int_I\lf|[b,C_\Gamma]f(x)\chi_{E_\alpha^\complement}(x)\r|^p\,dx
    \lesssim&\frac{1}{|I|^\lambda}\int_{I\cap E_\alpha^\complement}
    \frac{1}{|x|^p}R^{p-1+\lambda}\|f\|_{\morrey(\rr)}^p\,dx\\
    \lesssim&R^{p-1+\lambda}\|f\|_{\morrey(\rr)}^p\sum_{j=0}^\infty
    \frac{1}{|I\cap(2^{j+1}E_\alpha)|^\lambda}
    \int_{I\cap(2^{j+1}E_\alpha\setminus2^jE_\alpha)}\frac{1}{|x|^p}\,dx\\
    \lesssim&R^{p-1+\lambda}\|f\|_{\morrey(\rr)}^p\sum_{j=0}^\infty
    \frac{|2^{j+1}E_\alpha|^{1-\lambda}}{(2^j\alpha)^p}
    \sim\lf(\frac{R}{\alpha}\r)^{p-1+\lambda}\|f\|_{\morrey(\rr)}^p,
  \end{align*}
  where $2^j E_\alpha:=\{x\in\rr:\,\,|x|\le 2^j\alpha \}.$
  Hence we draw the conclusion that
  $$\lf\|[b,C_\Gamma]f\chi_{E_\alpha^\complement}\r\|_{\morrey(\rr)}
  \lesssim\lf(\frac{R}{\alpha}\r)^{(p-1+\lambda)/p}\|f\|_{\morrey(\rr)}.$$
  Therefore, condition (iii) of Lemma \ref{cpt} holds true for $[b,C_\Gamma]E$ as $\alpha\to\infty$.

  It remains to prove that $[b,C_\Gamma]E$ also satisfies (ii) of Lemma \ref{cpt}. Let
  $\epsilon\in(0,\frac{1}{2})$ be a fixed positive constant and $z\in\rr$ small enough. Then, for any
  $x\in\rr$, we have
  \begin{align*}
    &[b,C_\Gamma]f(x)-[b,C_\Gamma]f(x+z)\\
    &\quad=\int_{\rr}C_\Gamma(x,y)[b(x)-b(y)]f(y)\,dy-\int_{\rr}C_\Gamma(x+z,y)[b(x+z)-b(y)]f(y)\,dy\\
    &\quad=\int_{|x-y|>\epsilon^{-1}|z|}C_\Gamma(x,y)[b(x)-b(x+z)]f(y)\,dy\\
     &\quad\quad+\int_{|x-y|>\epsilon^{-1}|z|}[C_\Gamma(x,y)-C_\Gamma(x+z,y)][b(x+z)-b(y)]f(y)\,dy\\
     &\quad\quad+\int_{|x-y|\le\epsilon^{-1}|z|}C_\Gamma(x,y)[b(x)-b(y)]f(y)\,dy\\
     &\quad\quad-\int_{|x-y|\le\epsilon^{-1}|z|}C_\Gamma(x+z,y)[b(x+z)-b(y)]f(y)\,dy
     =:\sum_{i=1}^4 \mathrm{L}_i(x).
  \end{align*}

   We start with $\mathrm{L}_1$. Observe first that $|\mathrm{L}_1(x)|\le|b(x)-b(x+z)|C_{\Gamma,\,*}f(x)$.
   Due to $b\in C_c^\infty(\rr)$, we have $|b(x)-b(x+z)|\le \|b'\|_{L^\infty(\rr)}|z|$.
   Letting $z$ small enough depending on $\epsilon$ such that $|b(x)-b(x+z)|\le\epsilon$,
   it follows from Lemma \ref{T*} that
   \begin{align}\label{L1}
     \|\mathrm{L}_1\|_{\morrey(\rr)}
     \le\lf\||b(\cdot)-b(\cdot+z)|C_{\Gamma,\,*}f\r\|_{\morrey(\rr)}
     \lesssim\epsilon\|f\|_{\morrey(\rr)}.
   \end{align}

  Since $\epsilon\in(0,\frac{1}{2})$, we deduce that
  $$|x-y|>\epsilon^{-1}|z|\Longrightarrow |z|<|x-y|\epsilon<\frac{|x-y|}{2}.$$
  From the smoothness condition of the kernel $C_\Gamma$, we deduce that,
  for any $x,\,y,\,z\in\rr$ such that $|z|\le\frac{1}{2}|y-x|$,
  $$|C_\Gamma(x,y)-C_\Gamma(x+z,y)|\lesssim\frac{|z|}{|x-y|^2}.$$
  By this, together with $b\in C_c^\infty(\rr)$, we obtain
  \begin{align*}
    |\mathrm{L}_2(x)|&\lesssim|z|\int_{|x-y|>\epsilon^{-1}|z|}\frac{|f(y)|}{|x-y|^2}\,dy
    \sim|z|\sum_{k=0}^\infty\int_{2^k\epsilon^{-1}|z|<|x-y|\le2^{k+1}\epsilon^{-1}|z|}
    \frac{|f(y)|}{|x-y|^2}\,dy\\
    &\lesssim|z|\sum_{k=0}^\infty\frac{1}{(2^k\epsilon^{-1}|z|)^2}\int_{|x-y|\le2^{k+1}
    \epsilon^{-1}|z|}|f(y)|\,dy\\
    &\lesssim\sum_{k=0}^\infty\frac{\epsilon}{2^k}\frac{1}{I(x,2^{k+1}\epsilon^{-1}|z|)}
    \int_{|x-y|\le2^{k+1}\epsilon^{-1}|z|}|f(y)|\,dy
    \lesssim\epsilon \mathcal{M}f(x),
  \end{align*}
  where $\mathcal{M}f$ denotes the Hardy-Littlewood maximal function of $f$. Recalling that $\mathcal{M}$
  is bounded on Morrey spaces (see \cite{CF1987}), we arrive at the conclusion that
  \begin{align}\label{L2}
    \|\mathrm{L}_2\|_{\morrey(\rr)}\lesssim\epsilon\|f\|_{\morrey(\rr)}.
  \end{align}

  Observing that $C_\Gamma$ is a standard Calder\'{o}n-Zygmund kernel and $b\in C_c^\infty(\rr)$,
  by the mean value theorem, we find that, for any $x\in\rr$,
  \begin{align*}
    |\mathrm{L}_3(x)|&\lesssim\int_{|x-y|\le\epsilon^{-1}|z|}\frac{|x-y|}{|I(x,|x-y|)|}|f(y)|\,dy\notag\\
    &\sim\sum_{k=-\infty}^{-1}\int_{2^k\epsilon^{-1}|z|<|x-y|\le2^{k+1}\epsilon^{-1}|z|}
    \frac{|x-y|}{|I(x,|x-y|)|}|f(y)|\,dy\notag\\
    &\lesssim\sum_{k=-\infty}^{-1}2^k\epsilon^{-1}|z|\frac{1}{|I(x,2^{k+1}\epsilon^{-1}|z|)|}
    \int_{|x-y|\le2^{k+1}\epsilon^{-1}|z|}|f(y)|\,dy
    \lesssim\epsilon^{-1}|z|\mathcal{M}f(x).
  \end{align*}
  Therefore, we are led to the conclusion that
  \begin{align}\label{L3}
    \|\mathrm{L}_3\|_{\morrey(\rr)}\lesssim\epsilon\|f\|_{\morrey(\rr)}.
  \end{align}

  Similarly to the estimation of $\mathrm{L}_3$, we also have
 \begin{align}\label{L4}
    \|\mathrm{L}_4\|_{\morrey(\rr)}\lesssim\epsilon\|f\|_{\morrey(\rr)}.
  \end{align}

  Combining the estimates \eqref{L1} through \eqref{L4}, we obtain,
  for any $z\in\rr$ small enough,
  $$\lf\|[b,C_\Gamma]f(\cdot)-[b,C_\Gamma]f(\cdot+z)\r\|_{\morrey(\rr)}
  \lesssim\sum_{i=1}^4\|\mathrm{L}_i\|_{\morrey(\rr)}
  \lesssim\epsilon\|f\|_{\morrey(\rr)}.$$
  This shows that $[b,C_\Gamma]E$  satisfies (ii) of Lemma \ref{cpt}. Hence, $[b,C_\Gamma]$ is a
  compact operator. This finishes the proof of the implication from (i) to (ii) of Theorem \ref{thm2}.
\end{proof}

\subsection{Proof of the implication relation from (iii) to (i) of Theorem \ref{thm2}}\label{s3.2}

Since the implication relation from (ii) to (iii) of Theorem \ref{thm2} is obvious, it suffices to show the implication relation from (iii) to (i) of Theorem \ref{thm2}. To this end,
we first recall the following equivalent characterization of $\CMO(\rr)$ proved by
 Uchiyama \cite{U1978} (see also \cite{D2002}).

\begin{lemma}\label{CMO}
  A function $f\in\CMO(\rr)$ if and only if $f$ satisfies the following three conditions.
  \begin{enumerate}
    \item[{\rm(i)}] $\lim_{\delta\to0}\sup_{\{I\subset\rr:\,|I|<\delta\}}M(f,I)=0;$
    \item[{\rm(ii)}] $\lim_{R\to\infty}\sup_{\{I\subset\rr:\,|I|>R\}}M(f,I)=0;$
    \item[{\rm(iii)}] $\lim_{R\to\infty}\sup_{\{I\subset\rr:\,I\cap I(0,\,R)=\emptyset\}}M(f,I)=0.$
  \end{enumerate}
\end{lemma}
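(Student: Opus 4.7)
The plan is to treat the two implications separately. For the forward direction, $f\in\CMO(\rr)\Rightarrow$(i), (ii) and (iii), I would use the defining approximation: given $\epsilon>0$ there exists $g_\epsilon\in C_c^\infty(\rr)$ with $\|f-g_\epsilon\|_{\BMO(\rr)}<\epsilon$, so that $M(f,I)\le\epsilon+M(g_\epsilon,I)$ for every interval $I$. It therefore suffices to verify all three conditions for an arbitrary $g\in C_c^\infty(\rr)$. Here (i) follows from the uniform continuity of $g$; (iii) is trivial since $g$ vanishes on any interval disjoint from $\supp g$; and (ii) follows from the crude bound $M(g,I)\ls\|g\|_{L^\infty(\rr)}|\supp g|/|I|$ for $|I|$ large. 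Passing $\epsilon\to0$ then gives the three limits for $f$.

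For the converse, I would construct a $C_c^\infty(\rr)$-approximant of $f$ by truncation followed by mollification. Fix a smooth cutoff $\eta_R$ equal to $1$ on $I(0,R)$, supported in $I(0,2R)$, with $\|\eta_R'\|_{L^\infty(\rr)}\ls 1/R$, together with a standard mollifier $\varphi_\delta$, and set $f_{R,\delta}:=(f\eta_R)\ast\varphi_\delta\in C_c^\infty(\rr)$. The task then reduces to showing that $\|f-f_{R,\delta}\|_{\BMO(\rr)}$ can be made arbitrarily small by choosing $R$ large and $\delta$ small. I would split the supremum defining this BMO norm according to whether a given interval $I$ is small ($|I|<\delta$), large ($|I|>R$), far from the origin, or in an intermediate regime. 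Condition (i) controls the small-interval regime through the standard mollification estimate, whereas (ii) and (iii) control the regimes where $M(f,I)$ is already small and $f_{R,\delta}$ is either essentially zero or essentially agrees with $f$.

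The main obstacle will be the transitional intervals that straddle the annulus where $\eta_R$ transitions from $1$ to $0$, since there the multiplication by $\eta_R$ introduces spurious oscillation beyond the mollification error. I would address this by writing $f-f\eta_R=f(1-\eta_R)$, noting that $1-\eta_R$ vanishes on $I(0,R)$, and invoking (iii) together with the bound $\|\eta_R'\|_{L^\infty(\rr)}\ls 1/R$ to estimate the mean oscillation of $f(1-\eta_R)$ on such transitional intervals. Combining this estimate with the mollification error supplied by (i), and choosing $\delta$ small relative to $1/R$, would yield $\|f-f_{R,\delta}\|_{\BMO(\rr)}\to 0$. Since the lemma is exactly the characterization proved by Uchiyama \cite{U1978} (see also \cite{D2002}), the full technical details can be matched to those references.
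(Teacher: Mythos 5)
First, a point of reference: the paper does not prove this lemma at all --- it is quoted from Uchiyama \cite{U1978} (see also \cite{D2002}) --- so your sketch cannot be matched against an argument in the text and has to stand on its own. Your forward direction is fine and standard. The converse, however, has a genuine gap at exactly the point you yourself flag as the main obstacle. Conditions (i)--(iii) control the \emph{oscillation} of $f$, not its \emph{size}, and they are compatible with $f$ being unbounded: for instance $f(x)=\sqrt{\log(2+|x|)}$ satisfies (i)--(iii) (its mean oscillation on an interval of length $\ell$ at distance $d$ from the origin is $O(\min\{1,\ell/d\}/\sqrt{\log d})$, and on $(-d,d)$ it is $O(1/\sqrt{\log d})$), yet with your cutoff ($\eta_R=1$ on $I(0,R)$, $\supp\eta_R\subset I(0,2R)$, $\|\eta_R'\|_{L^\infty(\rr)}\ls 1/R$) the function $f(1-\eta_R)$ climbs from $0$ to about $\sqrt{\log R}$ across $[R,2R]$, so $M\bigl(f(1-\eta_R),[R,2R]\bigr)\gtrsim\sqrt{\log R}\to\infty$. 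Concretely, on a transitional interval $I$ one has $f(1-\eta_R)=(f-f_I)(1-\eta_R)+f_I(1-\eta_R)$; condition (iii) handles the first term, but the second contributes $|f_I|\cdot\mathrm{osc}_I(\eta_R)$, and $|f_I|$ may grow without bound, so the bound $\|\eta_R'\|_{L^\infty(\rr)}\ls 1/R$ does not rescue you when $|I|\sim R$. Thus the transitional estimate is not bookkeeping that can simply be ``matched to the references''; it is the crux of Uchiyama's argument, and your version of it fails.

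The plan can be repaired in either of two standard ways. (a) Subtract the local average before truncating: approximate $f$ by $\bigl[(f-f_{I(0,2R)})\eta_R\bigr]\ast\varphi_\delta$, which is legitimate because constants are invisible to the $\BMO(\rr)$ seminorm. Then the dangerous term on a transitional interval $I$ is $|f_I-f_{I(0,2R)}|\min\{1,|I|/R\}$, and chaining with (iii) (for the doublings of $I$ of length at most about $R/2$, all disjoint from $I(0,R/2)$) and with (ii) (for the remaining comparable-to-$R$ scales) gives $|f_I-f_{I(0,2R)}|\ls\epsilon\bigl(1+\log(R/|I|)\bigr)$, whence the product is $O(\epsilon)$. (b) Alternatively, keep $f$ but spread the cutoff over a logarithmically long transition, say $\eta_R=1$ on $I(0,R)$, $\eta_R=0$ off $I(0,R^2)$, with $|\eta_R'(x)|\ls(|x|\log R)^{-1}$; then $|f_I|\cdot\mathrm{osc}_I(\eta_R)\ls\epsilon$ because the admissible growth of $|f_I|$ under (ii)--(iii) is at most logarithmic with small constant. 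With either modification, together with your mollification step (which uses (i)) and the easy estimates on very large intervals (via (ii)) and on intervals far from the origin (via (iii)), the converse closes.
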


The following lemma gives the upper and the lower bounds of the integrals of
$\{[b,C_{\Gamma}]f_j\}_{j\in\nn}$
on certain intervals, resembling \cite[Lemma 4.1]{LNWW2017}. The proofs are similar, the major
change being the substitution of the power for $\{f_j\}_{j\in\nn}$. In what follows, for any
$r\in\rr$, we use $\lfloor r\rfloor$ to denote the largest integer not greater than $r$.

\begin{lemma}\label{L&U}
 Assume that $b\in\BMO(\rr)$ with $\|b\|_{\BMO(\rr)}=1$, and there exist $\delta\in(0,\infty)$ and a sequence $\{I_j\}_{j\in\nn}:=\{I(x_j,r_j)\}_{j\in\nn}$ of intervals, with $\{x_i\}_{j\in\nn}\subset\rr$
 and $\{r_i\}_{j\in\nn}\subset(0,\infty)$, such that, for any $j\in\nn$,
 \begin{align}\label{MbI}
 M(b,I_j):=\frac{1}{|I_j|}\int_{I_j}\lf|b(y)-b_{I_j}\r|\,dy>\delta.
 \end{align}
 Then there exist functions $\{f_j\}_{j\in\nn}\subset\morrey(\rr)$, positive constants $A_1$, $\widetilde{C_0}$, $\widetilde{C_1}$
 and $\widetilde{C_2}$ such that, for any $j\in\nn$ and integer
 $k\geq\lfloor \log_2 A_1\rfloor,\ \|f_j\|_{\morrey(\rr)}\le\widetilde{C_0}$,
 \begin{align}\label{LB}
  \int_{I_j^k}\lf|[b,C_{\Gamma}]f_j(y)\r|^p\,dy
  \geq\widetilde{C_1}\delta^p\frac{|I_j|^{p-1+\lambda}}{|2^k I_j|^{p-1}}
 \end{align}
 and
 \begin{align}\label{UB}
  \int_{2^{k+1}I_j\setminus2^kI_j}\lf|[b,C_{\Gamma}]f_j(y)\r|^pdy
  \le\widetilde{C_2}\frac{|I_j|^{p-1+\lambda}}{|2^k I_j|^{p-1}},
\end{align}
where $I_j^k:=(x_j+2^kr_j,x_j+2^{k+1}r_j)$.
\end{lemma}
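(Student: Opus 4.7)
The plan is to adapt the construction of \cite[Lemma 4.1]{LNWW2017} by renormalizing the test functions $\{f_j\}_{j\in\nn}$ to the Morrey scale. Set $g_j(x):=\mathrm{sgn}(b(x)-b_{I_j})\chi_{I_j}(x)$, $c_j:=|I_j|^{-1}\int_{I_j}g_j(x)\,dx\in[-1,1]$, and define
$$f_j(x):=|I_j|^{-(1-\lambda)/p}\lf[g_j(x)-c_j\chi_{I_j}(x)\r].$$
Then $\supp f_j\subset I_j$, $\int_\rr f_j=0$, and $\|f_j\|_{L^\infty(\rr)}\le 2|I_j|^{-(1-\lambda)/p}$. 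Splitting the defining supremum of $\|\cdot\|_{\morrey(\rr)}$ into the cases $|J|\le|I_j|$ and $|J|>|I_j|$ yields $\|f_j\|_{\morrey(\rr)}\le\widetilde{C_0}$ for an absolute constant $\widetilde{C_0}$.

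For both \eqref{LB} and \eqref{UB}, I would use the mean-zero property of $f_j$ to split
$$[b,C_\Gamma]f_j(y)=(b(y)-b_{I_j})\,C_\Gamma f_j(y)-\int_{I_j}C_\Gamma(y,x)(b(x)-b_{I_j})f_j(x)\,dx=:\mathrm{J}_1(y)-\mathrm{J}_2(y).$$
For $y\in 2^{k+1}I_j\setminus 2^kI_j$, the smoothness of the Cauchy kernel together with $\int f_j=0$ and $\supp f_j\subset I_j$ yields $|C_\Gamma f_j(y)|\lesssim|I_j|^{-(1-\lambda)/p}\,2^{-2k}$; then the John--Nirenberg inequality gives
$$\|\mathrm{J}_1\|_{L^p(2^{k+1}I_j\setminus 2^kI_j)}^p\lesssim(k+1)^p\,|I_j|^\lambda\, 2^{-k(2p-1)},$$
which is $\lesssim|I_j|^\lambda\, 2^{-k(p-1)}$ once $k$ exceeds some threshold $k_0=k_0(p)$. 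The upper bound on $\mathrm{J}_2$ follows from $|C_\Gamma(y,x)|\lesssim(2^kr_j)^{-1}$, the assumption $\|b\|_{\BMO(\rr)}=1$, and H\"older's inequality, giving $|\mathrm{J}_2(y)|\lesssim|I_j|^{-(1-\lambda)/p}\,2^{-k}$ and hence $\|\mathrm{J}_2\|_{L^p(2^{k+1}I_j\setminus 2^kI_j)}^p\lesssim|I_j|^\lambda\, 2^{-k(p-1)}$. Combining these two estimates establishes \eqref{UB}, since $|I_j|^{p-1+\lambda}/|2^kI_j|^{p-1}\sim|I_j|^\lambda\, 2^{-k(p-1)}$.

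For the lower bound \eqref{LB}, I would extract a pointwise estimate from the imaginary part of $\mathrm{J}_2$. A direct computation of $C_\Gamma(y,x)=[\pi i(y-x)-\pi(A(y)-A(x))]^{-1}$ shows that, for $y\in I_j^k$ and $x\in I_j$, we have $y-x>0$, so $\mathrm{Im}\,C_\Gamma(y,x)$ has constant (negative) sign and satisfies $|\mathrm{Im}\,C_\Gamma(y,x)|\gtrsim(2^kr_j)^{-1}$ uniformly (this is the quantitative 1-1-homogeneity used in \cite[Lemma 3.4]{LNWW2017}, relying on $\|A'\|_{L^\infty(\rr)}<\infty$). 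Moreover, by construction, $(b(x)-b_{I_j})(g_j(x)-c_j)=|b(x)-b_{I_j}|-c_j(b(x)-b_{I_j})$ is pointwise nonnegative on $I_j$, and its integral over $I_j$ equals $|I_j|\,M(b,I_j)\ge\delta|I_j|$. Taking imaginary parts in $\mathrm{J}_2(y)$ thus yields the pointwise bound $|\mathrm{Im}\,\mathrm{J}_2(y)|\gtrsim\delta\,|I_j|^{-(1-\lambda)/p}\,2^{-k}$ on $I_j^k$, and hence $\|\mathrm{J}_2\|_{L^p(I_j^k)}^p\gtrsim\delta^p|I_j|^\lambda\, 2^{-k(p-1)}$. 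Choosing $A_1=A_1(\delta)\in(1,\infty)$ large enough that $\|\mathrm{J}_1\|_{L^p(I_j^k)}\le\tfrac12\|\mathrm{J}_2\|_{L^p(I_j^k)}$ whenever $k\ge\lfloor\log_2 A_1\rfloor$ (feasible because $(k+1)\,2^{-k/p}\delta^{-1}\to 0$) delivers \eqref{LB}.

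The principal obstacle is proving the quantitative \emph{lower} bound $|\mathrm{Im}\,C_\Gamma(y,x)|\gtrsim(2^kr_j)^{-1}$, which is what forces the argument to exploit the specific structure of the Cauchy kernel rather than the generic Calder\'on--Zygmund size and smoothness conditions. A secondary subtlety, absent in the $L^p$ version of \cite{LNWW2017}, is aligning the Morrey-scale normalization $|I_j|^{-(1-\lambda)/p}$ of $f_j$ with the geometric factors $|I_j|^{p-1+\lambda}/|2^kI_j|^{p-1}$ appearing on the right-hand sides of \eqref{LB} and \eqref{UB}; it is this alignment that pins down the exponent $(1-\lambda)/p$ in the definition of $f_j$.
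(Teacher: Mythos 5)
Your proposal is correct and follows essentially the same route as the paper: the same splitting $[b,C_\Gamma]f_j=(b(y)-c)\,C_\Gamma f_j(y)-C_\Gamma\big((b-c)f_j\big)(y)$, the same exploitation of the sign-definite component of the Cauchy kernel of size $\gtrsim (2^k r_j)^{-1}$ when $y$ lies to one side of $I_j$, cancellation of $f_j$ plus John--Nirenberg for the error term, and absorption of that term by choosing $A_1$ large. The only (harmless) deviations are that you center at the mean $b_{I_j}$ with $f_j\propto \mathrm{sgn}(b-b_{I_j})-c_j$ whereas the paper centers at the median $\alpha_{I_j}(b)$ with an indicator-difference construction (your variant even makes the cross term vanish exactly), together with two cosmetic slips --- the overall sign in your formula for $C_\Gamma(y,x)$ and the decay rate, which is $(k+1)2^{-k}\delta^{-1}$ rather than $(k+1)2^{-k/p}\delta^{-1}$ --- neither of which affects the argument.
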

\begin{proof}
  By Definition \ref{BMO} and the choice of $\alpha_I(b)$ in Lemma \ref{product},
  it is straightforward to show that, for any interval $I\subset \rr$,
  $$M(b,I)=\frac{1}{|I|}\int_I\lf|b(x)-b_I\r|\,dx\sim\frac{1}{|I|}\int_I\lf|b(x)-\alpha_I(b)\r|\,dx.$$
  Now, for any $j\in\nn$, we define the function $f_j$ as follows:
  $$f_j^1:=\chi_{I_{j,\,1}}-\chi_{I_{j,\,2}}:=\chi_{\{x\in I_j:\,\,b(x)>\alpha_{I_j}(b)\}}
    -\chi_{\{x\in I_j:\,\,b(x)<\alpha_{I_j}(b)\}},\quad
  f_j^2:=a_j \chi_{I_j}$$
  and
  $$f_j:=\lf|I_j\r|^{-(1-\lambda)/p}\lf(f_j^1-f_j^2\r),$$
  where $I_j$ is as in the assumption of Lemma \ref{L&U} and $a_j$ is a constant such that
  \begin{align}\label{intfj=0}
    \int_{\rr}f_j(x)\,dx=0.
  \end{align}
  Then, by the definition of $a_j$ and \eqref{median}, we claim that
  \begin{align}\label{1/2}
  \lf|a_j\r|\in\lf[0,\frac{1}{2}\r].
  \end{align}
  Indeed, for any $j\in\nn$, we have
  \begin{align*}
    0=\int_{\rr}f_j(x)\,dx=&\int_{\rr}\lf|I_j\r|^{-(1-\lambda)/p}\lf[\chi_{I_{j,\,1}}(x)-
                        \chi_{I_{j,\,2}}(x)-a_j \chi_{I_j}(x)\r]\,dx \\
                      =&\lf|I_j\r|^{-(1-\lambda)/p} \lf(\lf|I_{j,\,1}\r|-\lf|I_{j,\,2}\r|-a_j\lf|I_j\r|\r) \\
                   \le&\lf|I_j\r|^{-(1-\lambda)/p} \lf(\frac{1}{2}\lf|I_j\r|-\lf|I_{j,\,2}\r|-a_j\lf|I_j\r|\r)
                   \le\lf(\frac{1}{2}-a_j\r)\lf|I_j\r|^{-(1-\lambda)/p}\lf|I_j\r|.
  \end{align*}
  Thus, $a_j\in(-\infty,\frac{1}{2}]$. Similarly, $a_j\in[\frac{1}{2},\infty)$.
  Accordingly, $|a_j|\in[0,\frac{1}{2}]$.

  Besides, it follows immediately from the definition of $f_j$ and \eqref{1/2} that $\supp(f_j)\subset I_j$ and
  \begin{align}\label{>=0}
    f_j(y)\lf[b(y)-\alpha_{I_j}(b)\r]\geq 0, \,\, \forall y\in I_j.
  \end{align}
  Meanwhile, since \eqref{1/2} holds true, from a routine computation, we deduce that
  \begin{align}\label{|fj|}
    \lf|f_j(y)\r|\sim\lf|I_j\r|^{-(1-\lambda)/p}, \,\, \forall y\in \lf(I_{j,\,1}\bigcup I_{j,\,2}\r).
  \end{align}
  Moreover, by $\supp(f_j)\subset I_j$, we know that, for any interval $I\subset \rr$,
  $$\frac{1}{|I|^\lambda}\int_I\lf|f_j(x)\r|^p dx
       \lesssim\frac{1}{|I|^\lambda}\lf|I\cap I_j\r|\lf|I_j\r|^{-(1-\lambda)}
       \sim
       \begin{cases}
         \displaystyle{\frac{|I\cap I_j|}{|I_j|}\frac{|I_j|^\lambda}{|I|^\lambda},\quad |I|\geq|I_j|} \\
         \displaystyle{\frac{|I\cap I_j|}{|I|^\lambda}\frac{|I_j|^\lambda}{|I_j|},\quad |I|\le|I_j|}
       \end{cases}
       \lesssim 1.$$
  Thus,
  \begin{align*}
    \lf\|f_j\r\|_{\morrey(\rr)} \sim 1.
  \end{align*}

  Our task now is to prove inequality \eqref{LB} in Lemma \ref{L&U}. The trick of the proof is to
  notice the following decomposition: for any $y\in\rr$,
  \begin{align}\label{A-B}
    [b,C_\Gamma]f_j(y)=C_\Gamma\lf(\lf[b-\alpha_{I_j}(b)\r]f_j\r)(y)-
    \lf[b-\alpha_{I_j}(b)\r]C_\Gamma(f_j)(y)=:\mathrm{A}(y)-\mathrm{B}(y).
  \end{align}
  Fix a constant $A_1\in(4,\infty)$. Then, for any integer $k\geq\lfloor \log_2 A_1\rfloor$,
  we have
  \begin{align}\label{interval}
    2^{k+1}I_j\subset8I_j^k=\lf( x_j-\frac{5}{2}2^kr_j,x_j+\frac{11}{2}2^kr_j \r)
    \subset2^{k+3}I_j.
  \end{align}
  By \eqref{intfj=0}, \eqref{|fj|}, $\supp(f_j)\subset I_j$, the definition of $C_\Gamma$ and the fact that $|y-x_j|\geq2|z-x_j|$ for any $y\in(\rr\backslash2I_j)$ and $z\in I_j$, we conclude that,
  for any $y\in(\rr\backslash2I_j)$,
  \begin{align}\label{|B|}
     |\mathrm{B}(y)|&=\lf|\lf[b(y)-\alpha_{I_j}(b)\r]C_\Gamma(f_j)(y)\r|\\
       &\le\lf|b(y)-\alpha_{I_j}(b)\r|\int_{I_j}\lf|C_\Gamma(y,z)-C_\Gamma(y,x_j)\r|\lf|f_j(z)\r|dz \notag\\
       &\lesssim\lf|b(y)-\alpha_{I_j}(b)\r|\int_{I_j}\frac{|z-x_j|}{|y-x_j|^2}\lf|I_j\r|^{-(1-\lambda)/p}dz
       \sim \frac{|b(y)-\alpha_{I_j}(b)|}{|I_j|^{(1-\lambda)/p}|y-x_j|^2}\int_{I_j}\lf|z-x_j\r|dz \notag\\
       &\lesssim r_j\lf|I_j\r|^{1/p'+\lambda/p} \frac{|b(y)-\alpha_{I_j}(b)|}{|y-x_j|^2}.\notag
  \end{align}
  Notice that $I_j^k=(x_j+2^kr_j,x_j+2^{k+1}r_j)\subset (\rr\backslash2I_j)$ for any integer
  $k\geq\lfloor \log_2 A_1\rfloor$ and $A_1\in(4,\infty)$.
  Besides, $|x_j-y|\geq2^kr_j$ for all $y\in I_j^k$.
  Thus, from \eqref{|B|}, we deduce that
  \begin{align}\label{int|B|p}
    \int_{I_j^k}|\mathrm{B}(y)|^p\,dy\lesssim r_j^p\lf|I_j\r|^{p-1+\lambda}
                        \int_{I_j^k}\frac{|b(y)-\alpha_{I_j}(b)|^p}{|y-x_j|^{2p}}\,dy
                       \lesssim \frac{|I_j|^{p-1+\lambda}}{2^{2kp}r_j^p}
                        \int_{I_j^k}\lf|b(y)-\alpha_{I_j}(b)\r|^p\,dy.
  \end{align}
  Observe also that, by \eqref{interval}, we have $I_j^k\subset2^{k+1}I_j$
  for any integer $k\geq\lfloor \log_2 A_1\rfloor$ with $A_1\in(4,\infty)$.
  Thus, from $b\in \BMO(\rr)$ and the John-Nirenberg inequality, it follows that
  \begin{align*}
    \int_{I_j^k}\lf|b(y)-\alpha_{I_j}(b)\r|^p\,dy \notag
    \le&\int_{2^{k+1}I_j}\lf|b(y)-\alpha_{2^{k+1}I_j}(b)+\alpha_{2^{k+1}I_j}(b)-\alpha_{I_j}(b)\r|^p\,dy \notag \\
    \le&2^{p-1} \lf[\int_{2^{k+1}I_j}\lf|b(y)-\alpha_{2^{k+1}I_j}(b)\r|^p\,dy
    +2^{k+1}\lf|I_j\r|\lf|\alpha_{2^{k+1}I_j}(b)-\alpha_{I_j}(b)\r|^p \r] \notag \\
    \lesssim& 2^{p-1}\lf(2^{k+1}\lf|I_j\r|+k^p2^{k+1}\lf|I_j\r|\r)
    \lesssim k^p2^{k+1}\lf|I_j\r|.
  \end{align*}
  Therefore, we can estimate \eqref{int|B|p} ultimately by
  \begin{align}\label{int B}
  \int_{I_j^k}|\mathrm{B}(y)|^p\,dy\lesssim \frac{|I_j|^{p-1+\lambda}}{2^{2kp}r_j^p}k^p2^{k+1}\lf|I_j\r|
  \sim 2^{-kp}k^p\frac{|I_j|^{p-1+\lambda}}{|2^kI_j|^{p-1}}.
  \end{align}

  To estimate $\int_{I_j^k}|\mathrm{A}(y)|^p\,dy$, observe that $y>z$ for all
  $y\in I_j^k$ and $z\in I_j$, moreover,
  $$|y-z|\le \lf|x_j+2^{k+1}r_j-x_j+r_j\r|=\lf(2^{k+1}+1\r)r_j\le2^{k+2}r_j.$$
  Accordingly, using \eqref{MbI}, \eqref{>=0}, \eqref{|fj|} and the definition of $C_\Gamma$,
  we find that
  \begin{align*}
    |\mathrm{A}(y)|&=\lf| \int_{I_{j,\,1}\cup I_{j,\,2}} \frac{y-z-i[A(y)-A(z)]}{(y-z)^2+[A(y)-A(z)]^2}[b(z)-\alpha_{I_j}(b)]f_j(z)\,dz \r| \\
       &\gtrsim \int_{I_{j,\,1}\cup I_{j,\,2}} \frac{y-z}{(y-z)^2+[A(y)-A(z)]^2}\lf|b(z)-\alpha_{I_j}(b)\r|\lf|I_j\r|^{-(1-\lambda)/p}\,dz \\
       &\gtrsim \int_{I_{j,\,1}\cup I_{j,\,2}} \frac{1}{|y-z|}\lf|b(z)-\alpha_{I_j}(b)\r|\lf|I_j\r|^{-(1-\lambda)/p}\,dz \\
       &\gtrsim \frac{|I_j|^{-(1-\lambda)/p}}{2^{k+2}r_j}\int_{I_j}\lf|b(z)-\alpha_{I_j}(b)\r|\,dz
       \gtrsim \frac{|I_j|^{-(1-\lambda)/p}}{2^{k+2}r_j} M(b,I_j)\lf|I_j\r|
       \gtrsim \frac{\delta |I_j|^{(p-1+\lambda)/p}}{2^{k+2}r_j}.
  \end{align*}
  Thus,
  \begin{align}\label{int A}
  \int_{I_j^k}|\mathrm{A}(y)|^p\,dy\gtrsim \frac{\delta^p |I_j|^{p-1+\lambda}}{2^{p(k+2)}r_j^p} \lf|I_j^k\r|
  \sim \delta^p 2^{-(p+1)}\frac{|I_j|^{p-1+\lambda}}{|2^kI_j|^{p-1}}.
  \end{align}
  Applying \eqref{int B} and \eqref{int A} to \eqref{A-B}, we conclude that
  $$\int_{I_j^k}|[b,C_\Gamma]f(y)|^p\,dy
  \gtrsim \lf[\delta^p 2^{-(p+1)}-2^{-kp}k^p\r]\frac{|I_j|^{p-1+\lambda}}{|2^kI_j|^{p-1}}.$$
  Choose $A_1$ large enough such that, for any integer $k\geq\lfloor \log_2 A_1\rfloor$,
  $$\delta^p 2^{-(p+1)}-2^{-kp}k^p\gtrsim \delta^p.$$
  Thus, we obtain
  $$\int_{I_j^k}|[b,C_\Gamma]f(y)|^p\,dy\gtrsim \delta^p\frac{|I_j|^{p-1+\lambda}}{|2^kI_j|^{p-1}}.$$
  This shows that \eqref{LB} holds true.

  Next, we show that \eqref{UB} holds true. Observe that, for any $z\in I_j$ and $y\in I_j^k$,
  $$|y-z|\geq\lf|x_j+2^kr_j-x_j-r_j\r|=(2^k-1)r_j.$$
  We deduce the upper bound of $|A(y)|$ similarly. For any $y\in(\rr\setminus2I_j)$,
  by \eqref{|fj|}, we have
  \begin{align}\label{|A|}
    |\mathrm{A}(y)|&\le \int_{I_j}\lf|C_\Gamma(y,z)\r|\lf|b(z)-\alpha_{I_j}(b)\r|\lf|f_j(z)\r|\,dz\\
       &\lesssim \int_{I_j}\frac{1}{|y-z|}\lf|b(z)-\alpha_{I_j}(b)\r|\lf|I_j\r|^{-(1-\lambda)/p}\,dz
       \sim \frac{|I_j|^{-(1-\lambda)/p}}{(2^k-1)r_j}\int_{I_j}\lf|b(z)-\alpha_{I_j}(b)\r|\,dz\notag\\
       &\lesssim \frac{|I_j|^{-(1-\lambda)/p}}{(2^k-1)r_j}\|b\|_{\BMO(\rr)}\lf|I_j\r|
       \lesssim \frac{|I_j|^{(p-1+\lambda)/p}}{2^{k-1}r_j}.\notag
  \end{align}
  From \eqref{A-B}, \eqref{|B|} and \eqref{|A|}, we deduce that, for any integer
  $k\geq\lfloor \log_2 A_1\rfloor$,
  \begin{align*}
    \int_{2^{k+1}I_j\setminus2^kI_j}\lf|[b,C_{\Gamma}]f_j(y)\r|^p\,dy
    \lesssim&\int_{2^{k+1}I_j\setminus2^kI_j}|\mathrm{A}(y)|^pdy+
    \int_{2^{k+1}I_j\setminus2^kI_j}|\mathrm{B}(y)|^p\,dy \\
    \lesssim&\frac{|I_j|^{p-1+\lambda}}{2^{p(k-1)}r_j^p}2^k r_j
    +\frac{|I_j|^{p-1+\lambda}}{2^{2kp}r_j^p}\int_{2^{k+1}I_j\setminus2^kI_j}
    \lf|b(y)-\alpha_{I_j}(b)\r|^p\,dy\\
    \lesssim&\frac{2^{k+1}|I_j|^{p-1+\lambda}}{2^{p(k-1)}r_j^{p-1}}
    +\frac{|I_j|^{p-1+\lambda}}{2^{2kp}r_j^p}\lf|2^{k+1}I_j\r|k^p\|b\|_{\BMO(\rr)}^p\\
    \lesssim&\lf(2^{2p}+2^{p+1}\frac{k^p}{2^{kp}}\r)\frac{|I_j|^{p-1+\lambda}}{|2^kI_j|^{p-1}},
  \end{align*}
  which shows that \eqref{UB} holds true. This finishes the proof of Lemma \ref{L&U}.
\end{proof}

\begin{proof}[Proof of Theorem \ref{thm2}]
  $ (\mathrm{iii})\Longrightarrow (\mathrm{i})$.

  We employ the method from \cite{U1978}. Since $[b,C_\Gamma]$ is compact on $\morrey(\rr)$,
  it follows that $[b,C_\Gamma]$ is also bounded on $\morrey(\rr)$. Then, by Theorem \ref{thm1},
  we know that $b\in \BMO(\rr)$. To show $b\in \CMO(\rr)$, we use a contradiction argument
  via Lemma \ref{L&U}. Without loss of generality, we may assume that $\|b\|_{\BMO(\rr)}=1$.
  Notice that, if $b\notin\CMO(\rr)$, $b$ does not satisfy at least one of conditions (i) through (iii)
  of Lemma \ref{CMO}. We consider these cases orderly.

  \textbf{Case i)}
  Suppose $b$ does not satisfy (i) of Lemma \ref{CMO}, i.e.,
  $$\lim_{\delta\to 0}\sup_{\{I\subset\rr:\,|I|<\delta\}}M(b,I)=0,$$
  then there exist $\delta\in(0,\infty)$ and a sequence $\{I_j\}_{j\in\nn}$ of intervals satisfying
  $M(b,I_j)\in(\delta,\infty)$ for each $j\in\nn$ and $|I_j|\to0$ as $j\to\infty$.
  Let $f_j,\,\,\widetilde{C_1},\,\,\widetilde{C_2}$ and $A_1$ be as in Lemma \ref{L&U} and
  $A_2\in(A_1,\infty)$ large enough such that
  \begin{align}\label{A2}
    A_3:=8^{1-p}\widetilde{C_1}\delta^p A_1^{1-p}>
    \frac{2\widetilde{C_2}}{(1-2^{1-p})2^{\lfloor\log_2A_2\rfloor(p-1)}}.
  \end{align}
  Since $|I_j|\to0$ as $j\to\infty$, we may choose a subsequence $\{I_{j_l}^{(1)}\}_{l\in\nn}\subset\{I_j\}_{j\in\nn}$ such that
  \begin{align}\label{I1}
    \frac{|I_{j_{l+1}}^{(1)}|}{|I_{j_l}^{(1)}|}<\frac{1}{A_2}.
  \end{align}
  For fixed $l,\,m\in\nn$, let
    $$\mathcal{I}:=\lf(x_{j_l}^{(1)}+A_1r_{j_l}^{(1)},x_{j_l}^{(1)}+A_2r_{j_l}^{(1)}\r),\quad
    \mathcal{I}_1:=\mathcal{I}\setminus\lf\{y\in\rr:\,\,\lf|y-x_{j_{l+m}}^{(1)}\r|
    \le A_2r_{j_{l+m}}^{(1)} \r\}$$
    and
    $$\mathcal{I}_2:=\lf\{y\in\rr:\,\,\lf|y-x_{j_{l+m}}^{(1)}\r|> A_2r_{j_{l+m}}^{(1)}\r\}.$$
  Observe that
  $$\mathcal{I}_1\subset\lf\{y\in\rr:\,\,\lf|y-x_{j_l}^{(1)}\r|\le A_2r_{j_l}^{(1)} \r\}\cap\mathcal{I}_2
  \quad \mathrm{and}\quad \mathcal{I}_1=\mathcal{I}\setminus\lf(\mathcal{I}\setminus\mathcal{I}_2\r).$$
  Then, by the Minkowski inequality, we have
  \begin{align}\label{F1-F2}
    &\lf[\int_{A_2I_{j_l}^{(1)}}\lf|[b,C_\Gamma](f_{j_l})(x)
    -[b,C_\Gamma](f_{j_{l+m}})(x)\r|^p\,dx\r]^{1/p}\\
    &\quad\geq\lf[\int_{\mathcal{I}_1}\lf|[b,C_\Gamma](f_{j_l})(x)
    -[b,C_\Gamma](f_{j_{l+m}})(x)\r|^p\,dx\r]^{1/p}\notag\\
    &\quad\geq\lf[\int_{\mathcal{I}_1}\lf|[b,C_\Gamma](f_{j_l})(x)\r|^p\,dx\r]^{1/p}
    -\lf[\int_{\mathcal{I}_1}\lf|[b,C_\Gamma](f_{j_{l+m}})(x)\r|^p\,dx\r]^{1/p}\notag\\
    &\quad\geq\lf[\int_{\mathcal{I}_1}\lf|[b,C_\Gamma](f_{j_l})(x)\r|^p\,dx\r]^{1/p}
    -\lf[\int_{\mathcal{I}_2}\lf|[b,C_\Gamma](f_{j_{l+m}})(x)\r|^p\,dx\r]^{1/p}\notag\\
    &\quad=\lf[\int_{\mathcal{I}\setminus(\mathcal{I}\setminus\mathcal{I}_2)}
    \lf|[b,C_\Gamma](f_{j_l})(x)\r|^p\,dx\r]^{1/p}
    -\lf[\int_{\mathcal{I}_2}\lf|[b,C_\Gamma](f_{j_{l+m}})(x)\r|^p\,dx\r]^{1/p}
    =:\mathrm{F_1-F_2}.\notag
  \end{align}
  We first consider the term $\mathrm{F_1}$. To begin with, we now estimate the measure of
  $\mathcal{I}\setminus\mathcal{I}_2$. Assume that $E_{j_l}:=\mathcal{I}\setminus\mathcal{I}_2\neq\emptyset$.
  Then $E_{j_l}\subset A_2I_{j_{l+m}}^{(1)}$. Hence, by \eqref{I1}, we obtain
  \begin{align}\label{Eij}
    \lf|E_{j_l}\r|\le\lf|A_2I_{j_{l+m}}^{(1)}\r|=A_2\lf|I_{j_{l+m}}^{(1)}\r|<\lf|I_{j_l}^{(1)}\r|.
  \end{align}
  Now, let
  $$I_{j_l}^k:=\lf(x_{j_l}^{(1)}+2^k r_{j_l}^{(1)},x_{j_l}^{(1)}+2^{k+1} r_{j_l}^{(1)} \r),$$
  where integer $k\geq\lfloor \log_2 A_1\rfloor$ is as in Lemma \ref{L&U}. Then, by \eqref{Eij}, we have
  $$\lf|I_{j_l}^k\r|=2^kr_{j_l}^{(1)}=2^{k-1}\lf|I_{j_l}^{(1)}\r|\geq\lf|E_{j_l}\r|.$$
  From this, it follows that there exist at most two intervals, $I_{j_l}^{k_0}$ and
  $I_{j_l}^{k_0+1}$, such that $E_{j_l}\subset(I_{j_l}^{k_0}\cup I_{j_l}^{k_0+1})$.
  By \eqref{LB} in Lemma \ref{L&U}, we find that
  \begin{align*}
    \mathrm{F}_1^p&\geq\sum_{k=\lfloor\log_2 A_1\rfloor+1,\,k\neq k_0,\,k_0+1}^{\lfloor\log_2 A_2\rfloor}
    \int_{I_{j_l}^k}\lf|[b,C_\Gamma](f_{j_l})(y)\r|^p\,dy\notag\\
    &\geq\widetilde{C_1}\delta^p\sum_{k=\lfloor\log_2 A_1\rfloor+1,\,k\neq k_0,\,k_0+1}^
    {\lfloor\log_2 A_2\rfloor}\frac{|I_{j_l}^{(1)}|^{p-1+\lambda}}{|2^kI_{j_l}^{(1)}|^{p-1}}\notag\\
    &\geq\widetilde{C_1}\delta^p\sum_{k=\lfloor\log_2 A_1\rfloor+3}^{\lfloor\log_2 A_2\rfloor}
    \frac{1}{2^{k(p-1)}}\lf|I_{j_l}^{(1)}\r|^\lambda
    \geq A_3\lf|I_{j_l}^{(1)}\r|^\lambda.
  \end{align*}
  If $E_{j_l}:=\mathcal{I}\setminus\mathcal{I}_2=\emptyset$, the inequality above still holds true.

  On the other hand, from \eqref{A2} and \eqref{UB} in Lemma \ref{L&U}, we derive the estimate of
  $\mathrm{F}_2$ as follows:
  \begin{align*}
    \mathrm{F}_2^p&\le\sum_{k=\lfloor\log_2 A_2\rfloor}^\infty\int_{2^{k+1}I_{j_{l+m}}^{(1)}
    \setminus2^kI_{j_{l+m}}^{(1)}}\lf|[b,C_\Gamma](f_{j_{l+m}})(y)\r|^p\,dy\\
    &\le\widetilde{C_2}\sum_{k=\lfloor\log_2 A_2\rfloor}^\infty
    \frac{|I_{j_{l+m}}^{(1)}|^{p-1+\lambda}}{|2^kI_{j_{l+m}}^{(1)}|^{p-1}}
    \le\widetilde{C_2}\sum_{k=\lfloor\log_2 A_2\rfloor}^\infty
    \frac{1}{2^{k(p-1)}}\lf|I_{j_{l+m}}^{(1)}\r|^\lambda\\
    &\le\frac{\widetilde{C_2}}{(1-2^{1-p})2^{\lfloor\log_2A_2\rfloor(p-1)}}\lf|I_{j_{l+m}}^{(1)}\r|^\lambda
    =\frac{A_3}{2}\lf|I_{j_{l+m}}^{(1)}\r|^\lambda.
  \end{align*}

  Applying these two inequalities to \eqref{F1-F2}, we obtain
  $$\lf[\int_{A_2I_{j_l}^{(1)}}\lf|[b,C_\Gamma](f_{j_l})(x)-[b,C_\Gamma](f_{j_{l+m}})(x)\r|^p\,dx\r]^{1/p}
    \gtrsim \lf|I_{j_l}^{(1)}\r|^{\lambda/p},$$
  which leads to the conclusion that, for any $l,m\in\nn$,
  $$\lf\|[b,C_\Gamma](f_{j_l})-[b,C_\Gamma](f_{j_{l+m}})\r\|_{\morrey(\rr)}\gtrsim1.$$
  Therefore, $\{[b,C_\Gamma]f_j\}_{j\in\nn}$ is not relatively compact in $\morrey(\rr)$,
  which further implies that $[b,C_\Gamma]$ is not compact on $\morrey(\rr)$
  for any $p\in(1,\infty)$ and $\lambda\in(0,1)$.
  Accordingly, $b$ satisfies condition (i) of Lemma \ref{CMO}.

  \textbf{Case ii)}
  If $b$ violates condition (ii) of Lemma \ref{CMO}, that is
  $$\lim_{R\to\infty}\sup_{\{I\subset\rr:\,|I|>R\}}M(b,I)=0,$$
  then there exist $\delta\in(0,\infty)$ and a sequence $\{I_j\}_{j\in\nn}$ of intervals satisfying $M(b,I_j)\in(\delta,\infty)$ for each $j\in\nn$ and $|I_j|\to\infty$ as
  $j\to\infty$ as well.
  The proof of this case can be completed by a procedure analogous to that used in the proof of Case i).
  We take a subsequence $\{I_{j_l}^{(2)}\}_{l\in\nn}\subset\{I_j\}_{j\in\nn}$ such that
  \begin{align}\label{I2}
    \frac{|I_{j_l}^{(2)}|}{|I_{j_{l+1}}^{(2)}|}<\frac{1}{A_2}.
  \end{align}
  We can use a method similar to that used in the proof of Case i) and redefine our sets in a reversed order. That is, for fixed $l$ and $m$ belonging to $\nn$, let
    $$\mathcal{J}:=\lf(x_{j_{l+m}}^{(2)}+A_1r_{j_{l+m}}^{(2)},x_{j_{l+m}}^{(2)}+A_2r_{j_{l+m}}^{(2)}\r),
    \quad\mathcal{J}_1:=\mathcal{J}\setminus\lf\{y\in\rr:\,\,\lf|y-x_{j_{l}}^{(2)}\r|
    \le A_2r_{j_{l}}^{(2)} \r\}$$
    and
    $$\mathcal{J}_2:=\lf\{y\in\rr:\,\,\lf|y-x_{j_{l}}^{(2)}\r|> A_2r_{j_{l}}^{(2)}\r\}.$$
  Then we have
  $$\mathcal{J}_1\subset\lf\{y\in\rr:\,\,\lf|y-x_{j_{l+m}}^{(2)}\r|\le A_2r_{j_{l+m}}^{(2)}\r\}
  \cap\mathcal{J}_2 \quad \mathrm{and}\quad \mathcal{J}_1=\mathcal{J}\setminus(\mathcal{J}\setminus\mathcal{J}_2).$$
  As in Case i), by Lemma \ref{L&U} and \eqref{I2}, we know that $[b,C_\Gamma]$ is not compact on $\morrey(\rr)$, This contradiction implies that $b$ satisfies (ii) of Lemma \ref{CMO}.

  \textbf{Case iii)}
  Assume that condition (iii) of Lemma \ref{CMO} does not hold true for $b$. Namely,
  $$\lim_{R\to\infty}\sup_{\{I\subset\rr:\,I\cap I(0,\,R)=\emptyset\}}M(b,I)=0.$$
  Then there exists $\delta\in(0,\infty)$ such that, for any $R\in(0,\infty)$,
  there exists $I\subset[\rr\setminus I(0,R)]$ satisfying $M(b,I)\in(\delta,\infty)$.
  We claim that, for the above $\delta$, there exists a sequence $\{I_j\}_{j\in\nn}$ of
  intervals such that, for any $j\in\nn$,
  \begin{align}\label{MbIj}
    M(b,I_j)>\delta
  \end{align}
   and that, for any $l,\,\,m\in\nn$ and $l\neq m$,
   \begin{align*}
     A_2I_l\cap A_2I_m=\emptyset.
   \end{align*}
   Indeed, let $C_{(\delta)}$ be a positive constant which is determined later.
   Then, for any $R_1\in(C_{(\delta)},\infty)$, there exists an interval
   $I_1:=I(x_1,r_1)\subset\lf[\rr\setminus I(0,R_1)\r]$ such that \eqref{MbIj} holds true.
   Similarly, for
   $R_j:=|x_{j-1}|+2A_2C_{(\delta)}$ with $j\in\nn\setminus\{1\}$, there exists
   $I_j:=I(x_j,r_j)\subset[\rr\setminus I(0,R_j)]$
   satisfying \eqref{MbIj}. Repeating this procedure, we obtain $\{I_j\}_{j\in\nn}$ satisfying \eqref{MbIj}
   for each $j\in\nn$. Moreover, since $b$ satisfies condition (ii) of Lemma \ref{CMO},
   it follows that, for each aforementioned $\delta$, there exists a positive constant
   $\widetilde{C_{(\delta)}}$ such that $$M(b,I)<\delta$$ for any interval $I$ satisfying $|I|\in(\widetilde{C_{(\delta)}},\infty)$. This, together with the choice of $\{I_j\}_{j\in\nn}$, i.e.,
   \eqref{MbIj}, implies that, for any $j\in\nn$,
   $$r_j=\frac{1}{2}\lf|I_j\r|\le\frac{\widetilde{C_{(\delta)}}}{2}=:C_{(\delta)}.$$
   Notice also that, for any $j\in\nn$, $|x_{j+1}|-r_{j+1}\geq R_{j+1}=|x_j|+2A_2C_{(\delta)}$.
   Thus, for any $l$, $m\in\nn$, $l\neq m$, without loss of generality, we may assume $l<m$;
   we then have $|x_m-x_l|\geq|x_m|-|x_l|> 2A_2C_{(\delta)}$ and
   $$d\lf(A_2I_l, A_2I_m\r)\geq|x_l-x_m|-A_2r_l-A_2r_m
   >2A_2C_{(\delta)}-A_2C_{(\delta)}-A_2C_{(\delta)}=0.$$
   This implies that the above claim holds true.

   Now we define
   $$\mathcal{K}_1:=\lf(x_l+A_1r_l,x_l+A_2r_l\r)\quad\mathrm{and}\quad
    \mathcal{K}_2:=\lf\{y\in\rr:\,\,|y-x_{l+m}|>A_2r_{l+m}\r\}.$$
   Observe that $\mathcal{K}_1\subset\mathcal{K}_2$. Thus, as the estimations of $\mathrm{F}_1$
   and $\mathrm{F}_2$ in Case i), for any $l,\,m\in\nn$, $l\neq m$, we obtain
   \begin{align*}
     &\lf[\int_{A_2I_l}\lf|[b,C_\Gamma](f_l)(x)-[b,C_\Gamma](f_{l+m})(x)\r|^p\,dx\r]^{1/p}\\
     &\quad\geq\lf[\int_{\mathcal{K}_1}\lf|[b,C_\Gamma](f_l)(x)-[b,C_\Gamma](f_{l+m})(x)\r|^p\,dx\r]^{1/p}\\
     &\quad\geq\lf[\int_{\mathcal{K}_1}\lf|[b,C_\Gamma](f_l)(x)\r|^p\,dx\r]^{1/p}
     -\lf[\int_{\mathcal{K}_2}\lf|[b,C_\Gamma](f_{l+m})(x)\r|^p\,dx\r]^{1/p}
     \quad=:\mathrm{K}_1-\mathrm{K}_2.
   \end{align*}
   Again, by \eqref{LB} and \eqref{UB} in Lemma \ref{L&U}, as well as the definition of $A_3$ in
   \eqref{A2}, we conclude that $\mathrm{K}_1^p\geq A_3$ and $\mathrm{K}_2^p\le A_3/2$
   akin to Case i). To sum up, we obtain
   $$\lf[\int_{A_2I_l}\lf|[b,C_\Gamma](f_{l})(x)-[b,C_\Gamma](f_{l+m})(x)\r|^p\,dx\r]^{1/p}
    \gtrsim \lf|I_l\r|^{\lambda/p}.$$
   As a result,
   $$\lf\|[b,C_\Gamma](f_l)-[b,C_\Gamma](f_{l+m})\r\|_{\morrey(\rr)}\gtrsim1.$$
   This contradicts to the compactness of $[b, C_\Gamma]$ on $\morrey(\rr)$ for the given
   $p\in(1,\infty)$ and $\lambda\in(0,1)$, so $b$ also satisfies condition (iii) of Lemma \ref{CMO}.
   This finishes the proof of the implication relation from (iii) to (i) of Theorem \ref{thm2}
   and hence of Theorem \ref{thm2}.
\end{proof}

\bigskip

\noindent Jin Tao and Dachun Yang

\medskip

\noindent Laboratory of Mathematics and Complex Systems
(Ministry of Education of China),
School of Mathematical Sciences, Beijing Normal University,
Beijing 100875, People's Republic of China

\smallskip

\noindent{\it E-mails:} \texttt{jintao@mail.bnu.edu.cn} (J. Tao)

\hspace{0.86cm}\texttt{dcyang@bnu.edu.cn} (D. Yang)

\bigskip

\noindent

\noindent Dongyong Yang (Corresponding author)

\medskip

\noindent School of Mathematical Sciences, Xiamen University, Xiamen 361005, China

\smallskip

\noindent{\it E-mail:} \texttt{dyyang@xmu.edu.cn}

\end{document}